\documentclass[12pt]{amsart}

\usepackage{amsmath,amssymb,amsthm,graphicx,hyperref,pinlabel}
\usepackage{enumitem}
\usepackage{tikz}
\usetikzlibrary{arrows}

\usepackage[small,nohug,heads=vee]{diagrams}
\diagramstyle[labelstyle=\scriptstyle]

\usepackage[margin=1.25in]{geometry}

\numberwithin{equation}{section}
\setcounter{secnumdepth}{2}
\setcounter{tocdepth}{1}
\usepackage{hyperref}
\hypersetup{bookmarksdepth=3}

\setlength{\parskip}{.75ex}

\theoremstyle{plain}
\newtheorem{theorem}[equation]{Theorem}

\newtheorem{lemma}[equation]{Lemma}

\newtheorem*{claim}{Claim}
\newtheorem*{claim*}{Claim}

\newtheorem*{addendum-Thm15}{Addendum to Theorem~\ref{Thm_comparing_RF_weak_form}}

\theoremstyle{remark}

\theoremstyle{definition}

\newcommand{\acts}{\curvearrowright}

\newcommand{\wh}{\widehat}
\newcommand{\Ga}{\Gamma}

\newcommand{\gl}{\operatorname{GL}}

\newcommand{\D}{\partial}

\newcommand{\M}{{\mathcal M}}

\newcommand{\R}{\mathbb R}

\renewcommand{\t}{\mathfrak{t}}

\newcommand{\Z}{\mathbb Z}

\newcommand{\aff}{\operatorname{Aff}}

\newcommand{\ben}{\begin{enumerate}}

\newcommand{\de}{\delta}
\newcommand{\diam}{\operatorname{diam}}
\newcommand{\Diff}{\operatorname{Diff}}

\newcommand{\een}{\end{enumerate}}

\newcommand{\eps}{\epsilon}

\renewcommand{\fill}{\operatorname{fill}}

\newcommand{\id}{\operatorname{id}}

\newcommand{\isom}{\operatorname{Isom}}
\newcommand{\la}{\lambda}

\newcommand{\loc}{\operatorname{loc}}

\newcommand{\met}{\operatorname{Met}}

\newcommand{\nil}{\operatorname{Nil}}

\newcommand{\ol}{\overline}
\newcommand{\om}{\omega}
\newcommand{\Om}{\Omega}

\newcommand{\ra}{\rightarrow}

\newcommand{\Rm}{\operatorname{Rm}}

\renewcommand{\th}{\theta}

\DeclareMathOperator{\Int}{Int}
\DeclareMathOperator{\can}{can}

\DeclareMathOperator{\vol}{vol}

\DeclareMathOperator{\aut}{Aut}

\newcommand{\ov}[1]{\overline{#1}}
\newcommand{\td}[1]{\widetilde{#1}}

\def\XXint#1#2#3{{\setbox0=\hbox{$#1{#2#3}{\int}$}
     \vcenter{\hbox{$#2#3$}}\kern-.5\wd0}}

\begin{document}

\author{Richard H. Bamler}
\address{Department of Mathematics, University of California, Berkeley, Berkeley, CA 94720}
\email{rbamler@berkeley.edu}
\author{Bruce Kleiner}
\address{Courant Institute of Mathematical Sciences, New York University,  251 Mercer St., New York, NY 10012}
\email{bkleiner@cims.nyu.edu}
\thanks{The first author was supported by NSF grant DMS-1906500. \\
\hspace*{2.7mm} The second author was supported by NSF grants DMS-1711556, DMS-2005553, and a Simons Collaboration grant.}

\title{Diffeomorphism groups of prime 3-manifolds}

\date{\today}

\begin{abstract}
Let $X$ be a compact orientable non-Haken  $3$-manifold modeled on the Thurston geometry $\nil$.  We show that  the diffeomorphism group $\Diff(X)$ deformation retracts to the isometry group $\isom(X)$.   Combining this with earlier work by many authors, this  completes the determination the homotopy type of $\Diff(X)$ for any compact, orientable, prime $3$-manifold $X$. 
\end{abstract}

\maketitle

\tableofcontents

\section{Introduction}
For a  compact, connected, smooth $3$-manifold $X$, we denote by $\Diff(X)$
  the group of smooth diffeomorphisms of $X$ equipped with the $C^\infty$-topology.

Our goal in this paper is to prove the following:
\begin{theorem}
\label{thm_main}
Let $X$ be a compact, connected, geometrizable $3$-manifold modeled on the Thurston geometry $\nil$, and $g_0$ be a $\nil$-metric on $X$.  Then the inclusion $\isom(X,g_0)\hookrightarrow \Diff(X)$ is a homotopy equivalence.
\end{theorem}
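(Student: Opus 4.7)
The strategy I propose is to show that the $\Diff(X)$-orbit $\Diff(X)\cdot g_0\subset\met(X)$ of the $\nil$-metric $g_0$ is weakly contractible, which is equivalent, via the principal $\isom(X,g_0)$-bundle $\isom(X,g_0)\to\Diff(X)\to\Diff(X)\cdot g_0$, to the inclusion $\isom(X,g_0)\hookrightarrow\Diff(X)$ being a weak homotopy equivalence (and hence a genuine homotopy equivalence, using ANR properties of diffeomorphism groups of closed manifolds). To achieve this, one constructs a $\Diff(X)$-equivariant deformation retraction of $\met(X)$ onto the subspace $\met_{\nil}(X)$ of $\nil$-metrics, and then analyzes $\met_{\nil}(X)$ directly.

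The main tool would be Ricci flow, in the spirit of the authors' prior work on spherical space forms. The key geometric observation is that compact $\nil$-manifolds are \emph{anisotropic} expanding homothetic Ricci solutions: starting from a $\nil$-metric $g_0$, the unnormalized flow $g(t)$ coincides with the pullback of an anisotropically rescaled $\nil$-metric---different scalings in the Seifert-fiber direction and in the horizontal directions---by a one-parameter family of diffeomorphisms. After a suitable normalization fixing volume and the ratio of fiber length to horizontal length, every $\nil$-metric becomes a stationary point of the normalized flow, and one expects nearby initial metrics to converge to such fixed points.

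Concretely, the plan is: (i) establish long-time existence of Ricci flow on $X$ from an arbitrary smooth initial metric, exploiting that $X$ is aspherical so there are no finite-time singularities; (ii) pass to a DeTurck gauge making the flow $\Diff(X)$-equivariant on $\met(X)$, and identify the normalization under which $\nil$-metrics are fixed; (iii) prove that for every initial metric $g\in\met(X)$ the normalized flow converges in $C^\infty$ to some $\nil$-metric $\Phi(g)$; (iv) verify that $\Phi$ is continuous, so that the flow itself produces a $\Diff(X)$-equivariant deformation retraction $\met(X)\to\met_{\nil}(X)$; and (v) show that $\met_{\nil}(X)$ deformation retracts onto $\Diff(X)\cdot g_0$ by fixing a representative of the (essentially unique up to isotopy) Seifert structure on $X$ and parametrizing compatible $\nil$-metrics as $\Diff(X)\cdot g_0\times C$, where $C$ is a finite-dimensional contractible cone of anisotropic scalings on which the $\isom$-stabilizer is constant.

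The principal obstacle will be steps (iii)--(iv): establishing convergence of the normalized flow to a $\nil$-limit, continuously in the initial metric. Unlike the positively curved model cases, where the flow shrinks onto a round limit, or the hyperbolic cases, where it stabilizes after a uniform rescaling, the $\nil$-case involves genuine anisotropic collapse---the Seifert fibers evolve on a different scale than the base directions under the natural rescaling. One must therefore simultaneously track the collapsing Seifert structure, extract a limiting $\nil$-metric as the renormalized asymptotic geometry, and rule out discontinuities in the $\Diff(X)$-orbit of the limit as the initial metric varies over $\met(X)$. This kind of delicate asymptotic analysis of Ricci flow, together with the associated compactness and uniqueness theorems, is where the bulk of the technical work should lie.
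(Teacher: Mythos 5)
Your overall reduction---show $\met_{\nil}(X)$ (the space of unit-volume $\nil$-metrics, which by uniqueness of the $\nil$-structure is exactly the orbit $\Diff(X)\cdot g_0$) is contractible, and conclude via the fibration $\isom(X,g_0)\to\Diff(X)\to\met_{\nil}(X)$---is indeed the reduction used in the paper (Lemmas~\ref{lem_unique_unit_volume_nil} and \ref{lem_structure_met_nil}). However, the mechanism you propose for proving contractibility has a fundamental gap.

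The gap is in step (i): asphericity of $X$ does \emph{not} imply that Ricci flow from an arbitrary initial metric avoids finite-time singularities. Neck pinches and other singularities can develop from a generic smooth metric on any closed $3$-manifold, including $\nil$-manifolds; asphericity only controls the eventual topology, not the short-time behavior. This is precisely why the paper works with \emph{singular} Ricci flow (spacetimes in the sense of Kleiner--Lott / Bamler--Kleiner) rather than classical flow. Once singularities occur, steps (iii)--(iv) as you stated them---$C^\infty$-convergence of a normalized flow on $X$ to a $\nil$-metric, continuously in the initial data---cannot get off the ground: the flow is not even defined on all of $X$ for all time. What the paper actually proves (Theorem~\ref{thm_structure_singular_ricci_flow}(4)) is that for large $t$ the singular flow becomes non-singular and $\eps$-almost-flat, and then a separate ``rounding'' construction (Lemma~\ref{lem_rounding}, using harmonic $1$-forms to rebuild the Seifert fibration and its connection) canonically converts almost-flat metrics to honest $\nil$-metrics. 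Because the time-$t$ slice is a \emph{punctured} copy of $X$, this only produces a partially defined $\nil$-metric on $X$; contractibility of $\met_{\nil}(X)$ is then obtained not by a direct deformation retraction of $\met(X)$, but by the filling-in/extension machinery of the earlier GSC paper (here in the form of Lemma~\ref{lem_extending_metrics_ball}).

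Two smaller points. First, your claim that with a suitable volume-plus-anisotropy normalization ``every $\nil$-metric becomes a stationary point of the normalized flow'' is more delicate than it sounds: the Ricci flow of $g_{\nil}$ evolves by Carnot dilations, and these automorphisms of $\nil$ do not descend to the compact quotient, so the corresponding ``gauge'' cannot be realized by a one-parameter family in $\Diff(X)$. The paper sidesteps this entirely by never requiring $\nil$-metrics to be fixed under the flow---only that the rounding map $\Psi$ be the identity on them. Second, your step (v) is unnecessary: once you restrict to unit volume, $\met_{\nil}(X)$ \emph{is} the orbit $\Diff(X)\cdot g_0$ (by Lemma~\ref{lem_unique_unit_volume_nil}), so there is no extra cone $C$ of anisotropic scalings to quotient out.
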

\noindent
Combined with results from our previous two papers \cite{bamler_kleiner_gsc,bamler_kleiner_psc} and work of many other mathematicians \cite{hatcher_haken,ivanov_haken,ivanov_1,ivanov_2,gabai_smale_conjecture_hyperbolic,rubinstein_et_al,mccullough_soma}, Theorem~\ref{thm_main} completes the project of understanding the topology of $\Diff(X)$, when $X$ is a closed, orientable, prime $3$-manifold; see \cite{bamler_kleiner_gsc,rubinstein_et_al} for references and history.

We remark that in the setting of Theorem~\ref{thm_main} the Lie group $\isom(X,g_0)$ is an extension of a finite group $F$ by a circle $S^1$; it can be characterized using the Seifert structure on $X$.

The proof of Theorem~\ref{thm_main} is based on Ricci flow and follows \cite{bamler_kleiner_gsc} closely, except for some input specific to $\nil$ geometry.  We refer the reader to the introduction of \cite{bamler_kleiner_gsc} for an outline of the proof. 

Together with previous work, we have shown that singular Ricci flow offers a uniform approach to studying diffeomorphism groups of a large class of prime manifolds: spherical space forms, hyperbolic manifolds, $S^2 \times S^1$ and non-Haken manifolds modeled on $\nil$.
We expect that the methods from this paper can be readily adapted to also cover the non-Haken case modeled on $\mathbb{H}^2 \times \R$ and $\td{SL(2,\R)}$ and the Haken case modeled on Solv, which were covered by McCullough-Soma \cite{mccullough_soma} and Hatcher \cite{hatcher_haken}, respectively.
We believe that our methods can be even carried further to cover all geometrizable cases; this would be more technical, however, due to the fact that the asymptotic behavior of the Ricci flow is not fully understood in this case (see \cite[Question~1.7]{Bamler_longtime_0}).

\section{Preliminaries}
\label{sec_prelim}

In this section we cover some of the background material needed for the proof Theorem~\ref{thm_main}; since the proof follows \cite{bamler_kleiner_gsc} closely, the reader may wish to consult the preliminaries section of \cite{bamler_kleiner_gsc} as well.

\subsection{Nil geometry and Nil-structures} \label{subsec_prelim}

We collect some facts about $\nil$ geometry and $\nil$ structures \cite{scott_geometries}.

 Thurston's $\nil$-geometry is the Heisenberg group 
\begin{equation} \label{eq_nil_def}
 \nil := 
 \left\{ \left(\begin{array}{ccc}1 & x_1 & x_3 \\0 & 1 & x_2 \\0 & 0 & 1\end{array}\right) \;\; : \;\; x_1,x_2,x_3 \in \R \right\} ,
\end{equation}
equipped with the left invariant Riemannian metric $g_{\nil}$ for which the basis $X_1:=E_{1,2}$, $X_2:=E_{2,3}$, $X_3:=E_{1,3}$ of elementary matrices is orthonormal; here we use the standard  identification of the Lie algebra $L(\nil)$ with the space of strictly upper triangular matrices.    We have $[X_1,X_2]=X_3$, $[X_1,X_3]=[X_2,X_3]=0$. In what follows we will use $\nil$ to denote the Lie group or the Riemannian manifold  $(\nil,g_{\nil})$, depending on the context. 

 The automorphism group of $L(\nil)$ is generated by elements whose matrix with respect to the basis $X_1,X_2,X_3$ has one of the two forms:
\begin{align}
\label{eqn_gl2_aut}
\left(\begin{array}{ccc}A & 0 \\0 &\det A\end{array}\right) \\
 \left(\begin{array}{ccc}1& 0& 0 \\0 &1&0\\
b_1&b_2&1\end{array}\right)
\end{align}
for some $A\in \gl(2,\R)$, $b_1,b_2\in \R$.  Automorphisms as in \eqref{eqn_gl2_aut} with $A=\la\id$ are (Carnot) dilations.
The affine group  of $\nil$ is the group $\aff(\nil)$ generated by the left translations and the automorphisms; this is the semidirect product of the translations with the automorphism group $\aut(\nil)$.  Every $\Phi\in \aff(\nil)$ carries left invariant vector fields to left invariant vector fields, and we denote by $D\Phi\in \aut(L(\nil))$ the associated Lie algebra automorphism.   The isometry group $\isom(\nil)$ is the set of $\Phi\in  \aff(\nil)$ such that $D\Phi$ is as in \eqref{eqn_gl2_aut} for some $A\in O(2)$.  Carnot dilations normalize $\isom(\nil)$.    Any  left-invariant metric $g$ on $\nil$ is homothetic to $g_{\nil}$; more specifically, there exists  an automorphism $\phi \in \aut (\nil)$ and some $\lambda > 0$ such that $g = \lambda^2 \phi^* g_{\nil}$.

 We identify the abelianization $\nil/\exp(\R X_3)$ with $\R^2$.  Any automorphism of $\nil$ preserves the center $\exp(\R X_3)$, and hence any   $\Phi\in\aff(\nil)$  induces an affine map $\R^2\ra \R^2$.  This yields an $\aff(\nil)$-equivariant fibration $\nil\ra\R^2$; this is also an $\isom(\nil)$-equivariant fibration, when we restrict to $\isom(\nil)\subset \aff(\nil)$.

 A {\bf Nil-structure} on a manifold $X$ is a Riemannian metric $g$ locally isometric to $\nil$; we say that $X$ is {\bf modeled on $\nil$} if it admits a $\nil$-structure.   When $X$ is compact and connected, then the universal cover $(\td X,\td g)$ is isometric to $\nil$, and up to isometry we have $X=\nil/\Ga$ for some lattice $\Ga\subset \isom(\nil)$ acting freely on $\nil$.  Since $\aff(\nil)\acts \nil$ is orientation preserving, $\nil/\Ga$ is orientable.

 Any lattice $\Ga\subset \isom(\nil)$ has a finite index translation subgroup \cite{auslander_bieberbach,raghunathan_discrete_subgroups_lie_groups}.  Any isomorphism $\Ga_1\ra \Ga_2$ between two lattices $\Ga_1,\Ga_2\subset\isom(\nil)$ is induced by (conjugation by) some $\Phi\in \aff(\nil)$  \cite{auslander_bieberbach}.  

If $\Ga\subset\isom(\nil)$ is a lattice acting freely on $\nil$, then as above we obtain an isometric action $\Ga\acts\R^2$, and a quotient Euclidean orbifold $\R^2/\ov \Ga$, where $\ov\Ga$ is the image of $\Ga$ in $\isom(\R^2)$.   The abelianization fibration $\nil\ra\R^2$ is $\Ga$-equivariant, and descends to a Seifert fibration $\nil/\Ga\ra \R^2/\ov\Ga$ over a flat orbifold whose  singular points are only of cone type (i.e., there are no reflectors or corner reflectors).    By conjugating $\Ga$ by a Carnot dilation we may arrange that $\nil/\Ga$ has unit volume.  

An orientable 3-manifold $X$ is called {\bf Haken} if it is prime and contains an embedded, $2$-sided, incompressible surface  (i.e.,  a surface $Y\subset X$ of nonpositive Euler characteristic such that $\pi_1(Y) \to \pi_1(X)$ is injective).

\begin{lemma}
\label{lem_unique_unit_volume_nil}
Suppose $X$ is a compact connected geometrizable non-Haken $3$-manifold modeled on $\nil$.  Then it has a unique unit volume $\nil$-structure, up to isometry.
\end{lemma}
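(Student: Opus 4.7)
The plan is to translate the uniqueness statement into a conjugacy problem for lattices. By Subsection~\ref{subsec_prelim}, a unit-volume $\nil$-structure $g_i$ on $X$ gives an isometric identification $(X,g_i) \cong \nil/\Gamma_i$ for some lattice $\Gamma_i \subset \isom(\nil)$ acting freely, well-defined up to $\isom(\nil)$-conjugation, so it suffices to show that $\Gamma_1,\Gamma_2$ are conjugate in $\isom(\nil)$. Applying the Auslander--Bieberbach result cited in Subsection~\ref{subsec_prelim} to the isomorphism $\Gamma_1 \cong \pi_1(X) \cong \Gamma_2$, I obtain some $\Psi \in \aff(\nil)$ with $\Psi\Gamma_1\Psi^{-1} = \Gamma_2$. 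Using $\aff(\nil) = \nil\rtimes\aut(\nil)$ and that translations are isometries, I may assume $\Psi \in \aut(\nil)$, and using the generators of $\aut(\nil)$ listed in Subsection~\ref{subsec_prelim}, I write $\Psi = \phi_A \circ S_b$, where $\phi_A$ has the form \eqref{eqn_gl2_aut} for some $A \in \gl(2,\R)$ and $S_b$ is a shear.

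The heart of the proof is to exploit the non-Haken hypothesis via the base orbifold. The abelianization $\nil \to \R^2$ is $\aff(\nil)$-equivariant, so $\Psi$ descends to an affine map $\ov\Psi \in \aff(\R^2)$ with linear part $A$, conjugating $\ov\Gamma_1$ to $\ov\Gamma_2$. The Seifert fibrations $\nil/\Gamma_i \to \R^2/\ov\Gamma_i$ are the ones induced on $X$ by the two $\nil$-structures, and the base orbifolds $\R^2/\ov\Gamma_i$ are closed flat $2$-orbifolds. Since $X$ is non-Haken, each base must be one of the three rigid Euclidean triangle orbifolds $S^2(2,3,6)$, $S^2(2,4,4)$, $S^2(3,3,3)$: for any other closed Euclidean $2$-orbifold base, one can find an essential simple closed curve whose preimage in $X$ is a vertical $2$-sided incompressible surface, contradicting non-Hakenness. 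Each of these three orbifolds admits a unique flat structure up to scaling, so $\ov\Psi$ may be taken to be a Euclidean similarity: $A = \lambda O$ for some $\lambda > 0$ and $O \in O(2)$. Correspondingly $\phi_A = \phi_\lambda \circ \phi_O$, a Carnot dilation composed with an element of $\isom(\nil) \cap \aut(\nil)$.

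Finally, the unit-volume hypothesis eliminates the Carnot dilation and a shear calculation eliminates the shear. The Jacobian of $\phi_\lambda$ on $L(\nil)$ is $\lambda^4$ while $\phi_O$ and $S_b$ preserve Haar measure, so $\vol(\nil/\Gamma_2) = \lambda^4\,\vol(\nil/\Gamma_1)$, and the unit-volume hypothesis gives $\lambda = 1$, leaving $\Psi = \phi_O \circ S_b$. A direct matrix computation using \eqref{eqn_gl2_aut} shows that for $\gamma = T_p \phi_{O'} \in \Gamma_1 \subset \isom(\nil)$, the conjugate $\Psi\gamma\Psi^{-1}$ lies in $\isom(\nil)$ only if $(I - O')b = 0$. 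Since the rotation parts $O'$ appearing in $\Gamma_1$ include nontrivial cone-point rotations (of orders $2$, $3$, $4$, or $6$) from the base, for which $I - O'$ is invertible, we conclude $b = 0$, so $\Psi = \phi_O \in \isom(\nil)$. The main obstacle is the orbifold/Seifert-theoretic step in the second paragraph, namely verifying that non-Hakenness forces the base orbifold into the rigid list (and arranging $\ov\Psi$ to be a similarity); the remaining reductions are essentially linear algebra on $\aut(\nil)$ combined with the stated volume and conjugacy constraints.
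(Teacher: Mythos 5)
Your proposal is correct and follows essentially the same route as the paper's proof: invoke Auslander--Bieberbach to produce an affine conjugator, use the non-Haken hypothesis to show the base orbifold is a Euclidean triangle orbifold (equivalently, that the induced orthogonal representation on $\R^2 = L(\nil)/\R X_3$ is irreducible), deduce from this that the linear part on $\R^2$ is a similarity, kill the dilation factor with the unit-volume hypothesis, and kill the shear by conjugating a nontrivial rotation. The paper packages the similarity/shear steps slightly more abstractly (Schur's lemma on $X_3^\perp$, plus the observation that equivariance forces $D\Phi$ to preserve $X_3^\perp$, which is exactly your shear computation), whereas you make the shear computation explicit; the substance is the same.
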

\begin{proof}
Let $\Ga:=\pi_1(X)$, and consider two isometric covering group actions $\Ga\stackrel{\rho,\,\rho'}{\acts}\nil$ with quotients diffeomorphic to $X$ and of unit volume; we denote by $\Ga\stackrel{\ov\rho\,,\,\ov\rho'}{\acts}\R^2$ the induced isometric actions on $\R^2$, and $\Ga\stackrel{D\rho,\,D\rho'}{\acts}L(\nil)$, $\Ga\stackrel{D\ov\rho,\,D\ov\rho'}{\acts}\R^2$ the associated linear actions obtained by taking ``linear parts''.  

\begin{claim}
The orthogonal actions $D\ov\rho$, $D\ov\rho'$ are irreducible.
\end{claim}

\begin{proof}
Since $X$ is non-Haken, the quotient orbifold $\R^2/\ov\rho(\Ga)$ contains no 2-sided, embedded, closed geodesics.
By the classificaton of flat orbifolds, $\R^2/\ov\rho(\Ga)$ must be a 2-sphere with 3 cone points, which implies irreducibility of $D\ov\rho$.

Alternatively, we can argue as follows:
Suppose $D\ov\rho$ is reducible, so each element of $\Gamma$ acts as a reflection, a rotation by $\pi$ or trivially.
If there is an element $g \in \Gamma$ that acts by a reflection, then $\ov\rho(g)$ must be a glide reflection; let $\gamma \subset \R^2$ be its axis.
If there is no such element, then let $\gamma \subset \R^2$ be a line that projects to a circle under the quotient map $\R^2 \to \R^2 / \ov\rho(\Gamma)$.
In both cases, a generic line $\gamma' \subset \R^2$ parallel to $\gamma$ projects to a 2-sided, embedded, geodesic in $\R^2/\ov\rho(\Ga)$, which is a contradiction.

The proof for $D\ov\rho'$ is the same.
\end{proof}

It follows from the Claim that the actions $\Ga\stackrel{D\rho,\,D\rho'}{\acts}L(\nil)$ restricted to the $2$-dimensional subspace $X_3^\perp$, are irreducible.
By \cite{auslander_bieberbach} --- the statement and proof of which are variations on the classical Bieberbach theorem on crystallographic groups ---  there is an affine mapping $\Phi:\nil\ra\nil$ conjugating $\rho$ to $\rho'$.  Since $D\Phi:L(\nil)\ra L(\nil)$ is equivariant with respect to the linear actions $D\rho$, $D\rho'$, it preserves $X_3^\perp$, and induces a conformal mapping $X_3^\perp\ra X_3^\perp$; hence $D\Phi$ has a matrix as in \eqref{eqn_gl2_aut} where $A=\la B$ for some $\la>0$ and $B\in O(2)$, and so $\det D\Phi=\la^4$.
Now  $\vol(\nil/\rho(\Ga))=\vol(\nil/\rho'(\Ga))$ implies that  $|\det D\Phi|= 1$, so $\la=1$ and  $\Phi$ is an isometry.   
\end{proof}

\subsection{Spaces of maps and metrics}
If $X$ is a smooth manifold, we let $\met(X)$ denote the set of smooth Riemannian metrics on $X$ equipped with the $C^\infty$-topology.  
Now assume that $X$ is compact, connected and modeled on $\nil$.  Let $\met_{\nil}(X)$ denote the subspace of $\met(X)$ consisting of unit volume Riemannian metrics isometrically covered by $\nil$ (this is nonempty as mentioned above).
We will need the following result:
\begin{lemma}
\label{lem_structure_met_nil}
Suppose $X$ is non-Haken, and pick $g_X\in \met_{\nil}(X)$.  Then:
\begin{itemize}
\item There is a fibration $\Diff(X)\ra \met_{\nil}(X)$ with fiber homeomorphic to $\isom(X, \linebreak[1] g_X)$.
\item $\Diff(X)$ and $\met_{\nil}(X)$ are homotopy equivalent to CW complexes.
\item $\met_{\nil}(X)$ is contractible if and only if the inclusion $\isom(X,g_X)\ra \Diff(X)$ is a homotopy equivalence.
\end{itemize}
\end{lemma}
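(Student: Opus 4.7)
The plan is to package all three bullets around the evaluation map
\[
\pi: \Diff(X)\lra \met_{\nil}(X), \qquad \phi\longmapsto \phi^{*}g_X.
\]
First I would check that $\pi$ is well-defined: a diffeomorphism pulls back a metric locally isometric to $\nil$ to another such metric, and pullback preserves total volume, so $\phi^{*}g_X \in \met_{\nil}(X)$. Surjectivity is the only place Lemma~\ref{lem_unique_unit_volume_nil} enters: given any $g'\in \met_{\nil}(X)$, the uniqueness of unit volume $\nil$-structures provides an isometry $(X,g_X)\to (X,g')$, i.e.\ a diffeomorphism $\phi$ with $\pi(\phi)=g'$. The fiber over $g_X$ is, by definition, $\isom(X,g_X)$, and transitivity of the $\Diff(X)$-action identifies every fiber with this compact Lie group. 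To upgrade this to a fiber bundle (hence the first bullet), I would apply Ebin's slice theorem to the action $\Diff(X)\acts \met(X)$: near any $g\in \met_{\nil}(X)$ it produces a local slice, which restricted to the single orbit $\met_{\nil}(X)$ yields a local section of $\pi$ over a neighborhood of $g$. Equivalently, the right multiplication action of $\isom(X,g_X)$ on $\Diff(X)$ is smooth, free and proper (the group is a compact Lie group), so $\Diff(X)\to \Diff(X)/\isom(X,g_X)$ is automatically a principal bundle, and it remains to see that the continuous bijection $\Diff(X)/\isom(X,g_X)\to \met_{\nil}(X)$ is a homeomorphism, which is precisely where the slice produces an inverse.

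For the second bullet, $\Diff(X)$ has the homotopy type of a CW complex by a standard result (it is a separable Fr\'echet manifold, hence a metrizable ANR, so Milnor's theorem applies). The space $\met_{\nil}(X)$ is identified by the first bullet with the homogeneous-type space $\Diff(X)/\isom(X,g_X)$, and moreover it is a finite-dimensional smooth manifold, since a unit volume $\nil$-structure on $X$ is determined, up to the action of $\Diff(X)$, by a lattice $\Ga\subset \isom(\nil)$ with $\nil/\Ga\cong X$, of which there is only a finite-dimensional moduli. A finite-dimensional, second countable smooth manifold has the homotopy type of a CW complex.

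The third bullet then follows from the long exact sequence associated to the fiber bundle of the first bullet:
\[
\cdots \to \pi_n(\isom(X,g_X))\to \pi_n(\Diff(X))\to \pi_n(\met_{\nil}(X))\to \pi_{n-1}(\isom(X,g_X))\to \cdots
\]
Exactness shows that the inclusion $\isom(X,g_X)\hookrightarrow\Diff(X)$ induces isomorphisms on all $\pi_n$ if and only if $\pi_n(\met_{\nil}(X))=0$ for all $n$. By the second bullet, both sides of the inclusion, as well as $\met_{\nil}(X)$, have the homotopy type of CW complexes, so by Whitehead's theorem weak equivalences upgrade to homotopy equivalences and weak contractibility upgrades to contractibility, giving the stated equivalence.

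The main obstacle I anticipate is the rigorous justification that $\pi$ is a locally trivial bundle rather than just a continuous surjection with the advertised fiber; verifying this either via Ebin's slice theorem in the $C^\infty$-category or by inverting the quotient map $\Diff(X)/\isom(X,g_X)\to \met_{\nil}(X)$ requires the only nontrivial analytic input, whereas the CW-type statement and the long exact sequence argument are then essentially formal.
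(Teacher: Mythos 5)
Your overall strategy is exactly the one the paper uses: the paper's proof is a one-line reference to \cite[Lemma 2.2]{bamler_kleiner_gsc}, which proceeds precisely via the orbit map $\phi \mapsto \phi^* g_X$, Ebin's slice theorem to get a fibration, Milnor's ANR criterion for CW homotopy type, and the long exact sequence plus Whitehead. The uniqueness input Lemma~\ref{lem_unique_unit_volume_nil} plays here the role that Mostow rigidity (or uniqueness of the round metric) plays in the spherical and hyperbolic cases of \cite{bamler_kleiner_gsc}.

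There is, however, one factual error in your treatment of the second bullet. You claim that $\met_{\nil}(X)$, identified with $\Diff(X)/\isom(X,g_X)$, is a \emph{finite-dimensional} smooth manifold, arguing that the moduli of lattices $\Ga\subset\isom(\nil)$ is finite-dimensional. That reasoning conflates the moduli space $\met_{\nil}(X)/\Diff(X)$ (which is indeed finite-dimensional --- in fact a single point here, by Lemma~\ref{lem_unique_unit_volume_nil}) with $\met_{\nil}(X)$ itself. The latter is the full $\Diff(X)$-orbit of $g_X$, i.e.\ $\Diff(X)/\isom(X,g_X)$, and since $\Diff(X)$ is infinite-dimensional and $\isom(X,g_X)$ is a compact finite-dimensional Lie group, the quotient is infinite-dimensional. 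The conclusion you want still holds, but for a different reason: by Ebin's slice theorem the orbit is a (separable, metrizable) Fr\'echet submanifold of $\met(X)$, hence an ANR, hence of the homotopy type of a CW complex by the same Milnor criterion you already invoked for $\Diff(X)$. With that repair the argument is correct and complete.

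(A minor slip elsewhere: for surjectivity you want an isometry $(X,g')\to(X,g_X)$, not the other direction, so that $\phi^*g_X=g'$; of course one is the inverse of the other, so nothing is lost.)
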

This follows from Lemma~\ref{lem_unique_unit_volume_nil} as in the proof of \cite[Lemma 2.2]{bamler_kleiner_gsc}.

\subsection{Singular Ricci flows} 
We will use the terminology associated to the concept of singular Ricci flow, as defined in \cite{bamler_kleiner_gsc}; see also \cite{kleiner_lott_singular_ricci_flows, bamler_kleiner_uniqueness_stability}.
For convenience, we call a Ricci flow spacetime $(\M,\t,\D_\t,g)$, which we often abbreviate by $\M$, a {\bf singular Ricci flow} if it is an $(\eps_{\can},r)$-singular Ricci flow according to \cite[Definition~2.13]{bamler_kleiner_gsc}, for $\eps_{\can} > 0$ as in \cite[Theorem~2.14]{bamler_kleiner_gsc} and for some function $r : [0,\infty) \to (0,\infty)$.
Recall that for every closed, 3-dimensional Riemannian manifold $(M,g)$ there is a singular Ricci flow $\M$ whose initial time-slice $(\M_0,g_0)$ is isometric to $(M,g)$ and this flow is unique up to isometry (existence was established in \cite{kleiner_lott_singular_ricci_flows} and uniqueness in \cite{bamler_kleiner_uniqueness_stability}).
In contrast to Perelman's Ricci flow with surgery \cite{perelman_surgery}, surgeries in $\M$ are not performed at discrete time-steps; instead,  change of topology occurs at an infinitesimal scale.

In the following, we collect the necessary topological and geometric properties of singular Ricci flows that will be needed in this paper. 

If $M$ is a manifold, then a {\bf punctured copy of $M$} is a manifold diffeomorphic to $M\setminus S$, where $S\subset M$ is a finite (possibly empty) subset.  Note that if $M_1$, $M_2$ are compact $3$-manifolds, then punctured copies of $M_1$ and $M_2$ can be diffeomorphic only if $M_1$ is diffeomorphic to $M_2$.  This follows from the fact that if $D$, $D'$ are $3$-disks where $D'\subset \Int D$, then $\ol{D\setminus D'}$ is diffeomorphic to $S^2\times [0,1]$.   Hence the notion of ``filling in'' punctures is well-defined.  

A compact Riemannian manifold  $(M,g)$ is called {\bf $\eps$-almost-flat} if
\begin{equation*} \label{eq_collapsingcondition}
  \sup_{M} |{\Rm}| \big(\diam (M, g) \big)^2 \leq \eps. 
\end{equation*}

\begin{theorem}[Structure of singular Ricci flows]
\label{thm_structure_singular_ricci_flow}
Let $(X,g_0)$ be a compact Riemannian 3-manifold, and let $\M$ be a singular Ricci flow with initial time slice $\M_0=(X,g_0)$.  Then:
\ben
\item For every $t\in [0,\infty)$, each component $C\subset\M_t$ is a punctured copy of some compact $3$-manifold.  
\item Let $\M_t^{\fill}$ be the (possibly empty) $3$-manifold obtained from $\M_t$ by filling in the punctures and throwing away the copies of $S^3$.  Then $\M_t^{\fill}$ is a compact $3$-manifold, i.e., all but finitely many components of $\M_t$ are punctured copies of $S^3$.  Furthermore, for every $t_1<t_2$ the  prime decomposition of $\M_{t_2}^{\fill}$ is part of the prime decomposition of $\M_{t_1}^{\fill}$.  Hence there are only finitely many times at which the  prime decomposition of $\M_t^{\fill}$ changes.
\item $\M_t^{\fill}$ is irreducible and aspherical for large $t$, depending only on the following bounds on the geometry of $\M_0$: 
 upper bounds on the curvature and volume, and a lower bound on the injectivity radius.
  \item 
If $X$ is modeled on $\nil$, then for every $\eps > 0$ there is a time $T(g_0,\eps) <\infty$ such that $\M$ restricted to $[T,\infty)$ is non-singular and $(\M_t, g_t)$ is $\eps$-almost-flat for all $t \geq T$.
Here $T(g_0,\eps)$ may be chosen to depend continuously on the initial metric $g_0$ in the $C^{2}$-topology. 
\een

\end{theorem}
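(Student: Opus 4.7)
The plan is to address the four items in turn, treating items (1)--(3) by standard structure theory for singular Ricci flows and isolating item (4) as the main new input.

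For item (1), I would combine the canonical neighborhood assumption built into the definition of $(\eps_{\can},r)$-singular Ricci flow with the classification of $3$-dimensional $\kappa$-solutions: every end of a component $C \subset \M_t$ sits at arbitrarily small scale, is covered by $\eps_{\can}$-necks, and hence is diffeomorphic to $S^2 \times [0,\infty)$. There are only finitely many such ends, so $C$ is a compact $3$-manifold minus a finite set. For item (2), filling the cylindrical ends with $3$-balls produces a compact $3$-manifold $\M_t^{\fill}$, and along the flow the only topological transitions are neck splittings (which separate a connected summand once the complementary piece becomes extinct) together with extinctions of $S^3$-components; both operations can only simplify the prime decomposition. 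Finiteness of the change-of-topology times then follows because the Milnor decomposition has finitely many prime factors.

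For item (3), I would cite the topological consequence of geometrization as packaged for singular Ricci flows in \cite{bamler_kleiner_gsc,kleiner_lott_singular_ricci_flows}: after all neck pinches and spherical extinctions, the flow admits a thick-thin decomposition, the thick part is hyperbolic, and the thin part is graph. Imposing upper bounds on curvature and volume together with a positive lower bound on injectivity radius gives a family of initial data that is precompact in $C^\infty$ modulo diffeomorphism, so the stabilization time $T_0$ can be taken uniform on this family by a compactness argument using the stability theorem \cite{bamler_kleiner_uniqueness_stability}.

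Item (4) is the new part of the statement and would be proved by a compactness-and-rigidity argument. Since $X$ is Nil and therefore aspherical, item (3) forces $\M_t^{\fill} \cong X$ for $t \geq T_0(g_0)$, and any transient $S^3$-components arising from degenerate neck pinches become extinct in finite time; so $\M$ restricted to $[T_1,\infty)$ for some $T_1 \geq T_0$ is a smooth immortal Ricci flow on $X$. Suppose for contradiction that (4) fails for some $\eps>0$: there is a sequence $t_i \to \infty$ with $\sup |{\Rm}_{g_{t_i}}| \cdot \diam(X,g_{t_i})^2 \geq \eps$. Applying Bamler's long-time structure theory \cite{Bamler_longtime_0}, one would extract a Cheeger--Gromov subsequential limit of appropriately rescaled and time-translated flows; the limit is an expanding soliton whose underlying model, by the classification of $3$-dimensional expanding solitons covered by $\nil$ together with Lemma~\ref{lem_unique_unit_volume_nil}, must be (a rescaling of) the Heisenberg expanding soliton. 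That soliton collapses along its central fibers, so $|{\Rm}| \cdot \diam^2 \to 0$ along the rescaled sequence, contradicting the standing lower bound. Continuous dependence of $T(g_0,\eps)$ on $g_0$ in $C^2$ would then follow from the uniqueness/stability theorem \cite{bamler_kleiner_uniqueness_stability}, which gives $C^\infty_{\loc}$ continuous dependence of the singular Ricci flow on its initial metric.

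The hard part will be promoting this compactness argument from a qualitative existence statement for $T(g_0,\eps)$ to a continuous dependence statement: one must show that the approach to the Nil soliton is uniform across precompact families of initial data. I expect this to follow by combining the stability theorem with the rigidity of the asymptotic Nil structure granted by Lemma~\ref{lem_unique_unit_volume_nil}, but handling the interaction of the thick-thin decomposition with the collapsing regime carefully is the step where I anticipate the most technical work.
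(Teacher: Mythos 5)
Your treatment of assertions (1)--(3) is a plausible sketch of what goes into the cited result, but the paper simply invokes \cite[Theorem~2.16]{bamler_kleiner_gsc} for these, so there is nothing to compare beyond noting that the general picture you describe (ends covered by $\eps_{\can}$-necks, neck-pinches only simplifying the prime decomposition) is consistent.

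For assertion~(4), however, your argument takes a genuinely different route from the paper and, as written, has real gaps. First, you assert without justification that $\M$ restricted to $[T_1,\infty)$ is a \emph{smooth} immortal Ricci flow once the prime decomposition has stabilized. Topological stability of $\M_t^{\fill}$ does not by itself rule out further singularities in the singular Ricci flow; eventual non-singularity is a nontrivial output of Bamler's long-time structure theory, which the paper ports to singular Ricci flows by re-establishing the thick-thin decomposition, the curvature bound $|\Rm| \le C t^{-1}$ from \cite[Proposition~4.4]{Bamler_longtime_A}, and the minimal-area input from \cite[Section~3.7]{Bamler_longtime_C}. This is the bulk of the work and is missing from your outline.

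Second, the compactness-and-rigidity step via expanding solitons does not go through directly, because the Nil flow is \emph{collapsing}: for large $t$ the injectivity radius is uniformly small relative to the curvature scale, so there is no lower injectivity radius bound and one cannot extract a smooth pointed Cheeger--Gromov limit of the compact quotients. One must instead pass to the universal cover or invoke equivariant/collapsing limits in the sense of Cheeger--Fukaya--Gromov --- which is precisely what the paper's Lemma~\ref{lem_exclude_large_diam} does. Relatedly, the phrase ``$|\Rm|\cdot\diam^2 \to 0$ along the rescaled sequence'' is misleading: this quantity is scale-invariant, so rescaling changes nothing, and what you actually need is to rule out large normalized diameter at the original scale --- this is exactly the content of Lemma~\ref{lem_exclude_large_diam}, which is essential and entirely absent from your argument. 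In short, your soliton heuristic points in the right direction, but promoting it to a proof requires precisely the collapsing machinery and the diameter control that the paper supplies via \cite{Bamler_longtime_0} and Lemma~\ref{lem_exclude_large_diam}.

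Your continuity argument is essentially correct in spirit and aligned with the paper: both rely on the stability theorem \cite[Theorem~1.5]{bamler_kleiner_uniqueness_stability}. The paper makes one more step explicit that you gloss over: stability only controls the flow on a finite time interval $[0,T]$, so to conclude almost-flatness for \emph{all} $t \ge T$ one must feed the almost-flat time-$T$ slice into the Nil-stability theorem (Theorem~\ref{thm_nil_stability}); then a partition-of-unity argument produces a continuous choice of $T(g_0,\eps)$.
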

\begin{proof}
Assertions~(1)--(3) are a restatement of \cite[Theorem~2.16]{bamler_kleiner_gsc}.

To see Assertion~(4), note first that the corresponding statement (excluding the last sentence) holds for Ricci flow with surgery due to \cite[Theorem~1.4]{Bamler_longtime_0} and Lemma~\ref{lem_exclude_large_diam} below, assuming that surgeries are performed sufficiently precisely.
We will argue that the proof of \cite[Theorem~1.4]{Bamler_longtime_0} also works for singular Ricci flows.
This is true in general, but we focus on the case in which $(\M_0,g_0)$ is a non-Haken Nil-manifold, because some of the discussion simplifies in this case.

Note first that for large times we still obtain a thick-thin decomposition $\M_t = \M^{\text{thick}}_t \, \dot\cup \, \M^{\text{thin}}_t$, as described in \cite[7.4]{perelman_surgery} or \cite[Proposition 90.1]{kleiner_lott_perelman_notes}, \cite[Proposition~3.16]{Bamler_longtime_A}, where $\M^{\text{thick}}_t$ is diffeomorphic to a hyperbolic manifold whose cuspidal tori are incompressible within $\M_t$.
This can be seen either directly by reproving the necessary estimates in the setting of singular Ricci flows or by approximating $\M$ with Ricci flows with surgery whose surgery parameter goes to zero.
For the latter approach see the end of the proof of \cite[Theorem~2.16]{bamler_kleiner_gsc}.
Since, by assumption, $\M_0$, and thus also $\M^{\fill}_t$, does not have any hyperbolic pieces in its geometric decomposition, we must have $\M^{\text{thick}}_t = \emptyset$ for large $t$.

Next, we can decompose $\M_t = \M^{\text{thin}}_t$ for large $t$, as explained in \cite[Proposition~2.1]{Bamler_longtime_D}.
Note that this result is purely Riemannian, hence it can be directly applied in the setting of singular Ricci flows  at non-singular times, which are dense in $[0, \infty)$ \cite{kleiner_lott_singuylar_ricci_flows_ii}.
The subsequent study of the local collapse and the topological discussion in \cite[Section~2]{Bamler_longtime_D} also applies to non-singular time-slices of $\M$.

Let us now follow the lines of the proof of \cite[Theorem~1.1]{Bamler_longtime_0} in \cite[line~1054]{Bamler_longtime_D}, Case~2 (which is significantly simpler than Case~1).
Apart from what we have already discussed, this proof relies on the following additional ingredients:
\begin{itemize}
\item The curvature bound from \cite[Proposition~4.4]{Bamler_longtime_A}.
The proof of this bound can be translated easily to the setting of singular Ricci flows.
Alternatively, it can again be obtained by approximating $\M$ with Ricci flows with surgery and passing to the limit.
\item The existence of a family of piecewise smooth maps $f_{k,t} : V \to \M_t$ of controlled area, where the simplicial complex $V$ and the homotopy class of $f_{k,t}$ are constructed in \cite[Section~3.7]{Bamler_longtime_C}. 
Note that \cite{Bamler_longtime_C} is purely topological and makes no reference to Ricci flows at all and \cite{Bamler_longtime_B} only concerns classical Ricci flows, except for \cite[Proposition~5.5]{Bamler_longtime_B}, which easily generalizes to singular Ricci flows.
\end{itemize}
The remainder of the arguments in Case~2 of the proof of \cite[Theorem~1.1]{Bamler_longtime_0} also hold for non-singular time-slices of $\M$, showing that there is some time $T' <\infty$ such that $\M_{[T',\infty)}$ can be described by a conventional Ricci flow without singularities and that we have a curvature bound of the form $|{\Rm}| \leq Ct^{-1}$.

After time-shifting $\M_{[T',\infty)}$, we can finally apply \cite[Theorem~1.4]{Bamler_longtime_0} and Lemma~\ref{lem_exclude_large_diam} below to conclude the proof of the first part of Assertion~(4).

It remains to establish the last statement concerning continuity of $T$.
To see this, we use the stability theorem for singular Ricci flows, \cite[Theorem~1.5]{bamler_kleiner_uniqueness_stability} to conclude that 
if $\eps_1>0$ there is a $\delta (g_0, \eps_1) > 0$, which depends continuously on the initial metric $g_0$ in the $C^2$-topology, such that $(\M'_T, g'_T)$ is $\eps_1$-almost-flat for any singular flow $\M'$ with initial time-slice isometric to $(X, g'_0)$ if $\Vert g'_0 - g_0 \Vert_{C^2} < \delta$.  By Theorem~\ref{thm_nil_stability} below, for $\eps_1=\eps_1(\eps) >0$, we obtain that $(\M'_t, g'_t)$ is $\eps$-almost-flat for all $t \geq T$.
This implies that we may choose $T$ to depend continuously on the initial metric, for example,  using a partition of unity.  
\end{proof}

\begin{lemma} \label{lem_exclude_large_diam}
Suppose that $X$ is a compact, non-Haken manifold modeled on $\nil$ and $\pi : \widehat X \to X$ is a finite covering.
Then for any $K < \infty$ there is an $\eps (\pi, K) > 0$ such that there is no Riemannian metric $g$ on $X$ with the following properties:
\begin{enumerate}[label=(\arabic*)]
\item $|{\Rm}_{g}| \leq K$
\item $\diam (\widehat X, \widehat g := \pi^* g) \geq \eps^{-1}$
\item There is a Riemannian metric $\wh g'$ on $\wh X$ such that $(1-\eps) \wh g \leq \wh g' \leq (1+\eps) \wh g$ and such that the following holds: We can represent $\widehat X$ as the total space of a $T^2$-fibration over $S^1$ such that in a fibered neighborhood of each $T^2$-fiber there is a free, $\widehat g'$-isometric $T^2$-action whose orbits are the $T^2$-fibers of this fibration and those orbits have diameter $\leq \eps$.
\end{enumerate}
\end{lemma}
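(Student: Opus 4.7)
The plan is to argue by contradiction using collapse theory with bounded curvature.  Suppose the lemma fails: then for some $K<\infty$ there exist a sequence $\eps_i\to 0$ and metrics $g_i$ on $X$, $\wh g'_i$ on $\wh X$ satisfying (1)--(3) with $\eps=\eps_i$; set $\wh g_i:=\pi^* g_i$ and let $G$ denote the (finite) deck transformation group of $\pi$.

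First, I would verify that $(X,g_i)$ is a collapsing sequence with diameter tending to infinity.  Condition (3) gives that $\wh g_i$ and $\wh g'_i$ are bilipschitz with constant tending to $1$, and every point of $\wh X$ lies on a $T^2$-orbit of $\wh g'_i$-diameter at most $\eps_i$; this bounds $\injrad(\wh X,\wh g_i)$ uniformly by $(1+o(1))\eps_i\to 0$, and hence also $\injrad(X,g_i)\to 0$ uniformly since $\pi$ is a local isometry.  A standard covering-space estimate using a Dirichlet fundamental domain for the $G$-action yields $\diam(\wh X,\wh g_i)\le C(|G|)\diam(X,g_i)$, so $\diam(X,g_i)\ge\eps_i^{-1}/C(|G|)\to\infty$.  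Combined with (1), we obtain a sequence $(X,g_i)$ with uniformly bounded sectional curvature, unbounded diameter, and uniformly small injectivity radius.

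Second, I would invoke Cheeger--Fukaya--Gromov collapse theory: for $i$ large, $(X,g_i)$ carries a pure nilpotent $F$-structure whose rank equals the codimension of the local Gromov--Hausdorff limit.  The $2$-plane distribution tangent to the $T^2$-orbits of (3) is, for $i$ large, canonically determined by the local geometry (it is the span of tangent vectors to shortest loops, up to $o(1)$ error); because $\wh g_i$ is $G$-invariant, this distribution is $G$-equivariant on $\wh X$ and descends to a rank-$2$ distribution in the $F$-structure on $X$, giving rank at least $2$.  A rank-$3$ structure would force $(X,g_i)$ to be almost flat with bounded diameter, contradicting $\diam(X,g_i)\to\infty$; hence the rank is exactly $2$.

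Finally, I would derive a topological contradiction.  A closed orientable $3$-manifold admitting a pure rank-$2$ $F$-structure fibers (possibly as an orbifold) by tori or Klein bottles over a closed $1$-dimensional orbifold, and the possibilities are (a) a torus bundle over $S^1$, Haken via its incompressible torus fiber; (b) an orientable Klein-bottle bundle over $S^1$, also Haken since the boundary of a tubular neighborhood of the fiber is an incompressible embedded torus; or (c) a union of two solid tori (lens space, $S^3$, or $S^2\times S^1$) or of two twisted $I$-bundles over Klein bottles (a flat $3$-manifold).  None of these is a non-Haken manifold modeled on $\nil$, giving the desired contradiction.  The main technical obstacle is the $G$-equivariance assertion in the previous step; it rests on the Cheeger--Fukaya--Gromov $F$-structure being essentially uniquely determined by the metric up to small perturbations, so that $G$-isometric invariance of $\wh g_i$ passes to $G$-equivariance of the $F$-structure and permits descent to $X$.
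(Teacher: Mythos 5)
Your overall strategy—apply Cheeger--Fukaya--Gromov to extract a rank-two nilpotent structure, then derive a contradiction with $X$ being non-Haken—is the same as the paper's. However, you flag the key gap yourself and it is a real one: you need the $T^2$-orbit distribution of $\wh g'$ (from hypothesis~(3)) to be $G$-equivariant and to descend to $X$, but hypothesis~(3) makes \emph{no} claim that $\wh g'$ or its $T^2$-fibration is invariant under the deck group; it only posits that such data exist on $\wh X$. The argument that "the CFG structure is essentially uniquely determined by the metric" does not rescue this, because you would still have to match up the given $T^2$-fibration with a $G$-invariant CFG structure, and these need not be related by an isotopy respecting the group action. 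The paper sidesteps this entirely by running CFG on $(X,g)$ itself: the resulting nilpotent Killing structure $\mathfrak{N}$ lives on $X$, its pullback $\wh{\mathfrak{N}}$ to $\wh X$ is automatically deck-equivariant, and the $T^2$-action from hypothesis~(3) is used only to pin down the \emph{dimension} of the $\wh{\mathfrak{N}}$-orbits (it rules out orbits of dimension $0$ or $1$, while the diameter bound rules out dimension $3$). No descent of the $T^2$-distribution is ever needed. You should restructure accordingly: start the CFG argument downstairs.

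Two smaller points on the endgame. First, the paper does not classify manifolds with rank-two $F$-structures; once it knows the $\wh{\mathfrak{N}}$-orbits are all $2$-tori fibering $\wh X$ over a circle (Klein-bottle orbits are excluded by a local neighborhood argument), it simply observes that $\pi(\wh{\mathcal O}_p)=\mathcal O_{\pi(p)}$, so a generic $\mathfrak{N}$-orbit in $X$ is a $2$-torus covered by an incompressible torus fiber of $\wh X$, hence itself incompressible in $X$—a direct contradiction with non-Hakenness. This is shorter and avoids your enumeration in case~(c), where the list of possibilities (and the claim that it is exhaustive, including exceptional orbits over a 1-orbifold with reflector points) would need more care. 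Second, your reduction from rank~$3$ ("almost flat with bounded diameter") is fine in spirit but slightly different from the paper, which just uses that a $3$-dimensional orbit has small diameter, directly contradicting (2).
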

\begin{proof}
Let $\alpha > 0$ be a small constant whose value we will determine in the course of the proof.
Applying \cite{cheeger_fukaya_gromov_nilpotent_structures} to $g$ yields a $(\rho(\alpha,K),k(\alpha))$-round metric $g''$ on $X$ with $(1-\alpha) g \leq g'' \leq (1+\alpha) g$ together with a nilpotent Killing structure $\mathfrak{N}$, whose orbits have diameter $< \alpha$.
The lift $\widehat g'' := \pi^* g''$ is $(\rho,Ck)$-round, it is compatible with the pullback $\widehat{\mathfrak{N}} := \pi^*\mathfrak{N}$ and has orbits of diameter $< C \alpha$, where $C$ may depend on $\pi : \widehat X \to X$.
Note that we still have $(1-\alpha) \widehat g \leq \widehat g'' \leq (1+\alpha) \widehat g$ and thus $(1-2\alpha) \widehat g' \leq \widehat g'' \leq (1+2\alpha) \widehat g'$ for small enough $\eps$.
So if $\eps$ is chosen small enough, then the orbits $\widehat{\mathcal{O}}_p$ of $\widehat{\mathfrak{N}}$ cannot be 3-dimensional, by Assumption~(2), or 0 or 1-dimensional, by Assumption~(3).
So all orbits must by 2-dimensional and therefore isometric to a flat 2-torus or a Klein bottle.
Moreover, these orbits must lie close enough and be homotopic to the orbits of the local isometric $T^2$-action of $\widehat g'$ for sufficiently small $\alpha, \eps$.
We can now exclude orbits $\widehat{\mathcal{O}}_p$ diffeomorphic to the Klein bottle, since any such orbit has a neighborhood that is diffeomorphic to a $\Z_2$-quotient of $T^2 \times [-1,1]$, equivariant with respect to the corresponding $T^2$-action.
So the orbits $\widehat{\mathcal{O}}_p$ are all 2-tori and form a fibration of $\widehat X$ over a circle.
On the other hand, by construction, we have $\pi (\widehat{\mathcal O}_p) = \mathcal{\mathcal O}_{\pi(p)}$ for any $p \in \widehat X$.
So the orbits $\mathcal{O}_p$ of $(X,g'')$ must again be 2-dimensional and, therefore, generic orbits $\mathcal{O}_p$ must be 2-tori that are finitely covered by incompressible 2-torus orbits $\widehat{\mathcal O}_p$ of $(\widehat X, \widehat g'')$.
This, however, contradicts our assumption that $X$ is non-Haken.
\end{proof}

\bigskip
The following theorem is a uniform version of the main result in \cite{Guzhvina}; this slightly improved statement can be easily extracted from the proof in \cite{Guzhvina}.  We have also included a proof for the reader's convenience.

\begin{theorem}[\cite{Guzhvina}]
\label{thm_nil_stability}
For every $\eps>0$ there exists $\de=\de(\eps,n)>0$ such that Ricci flow starting from any $\de$-almost-flat $n$-manifold is immortal  and $\eps$-almost-flat for all $t\geq 0$.
\end{theorem}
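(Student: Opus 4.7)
The plan is a compactness-and-contradiction argument that reduces the statement to the analysis of Ricci flow on left-invariant metrics on nilpotent Lie groups. Suppose the conclusion fails for some $\eps > 0$: there exist closed $n$-manifolds $(M_i, g_i)$ that are $\de_i$-almost-flat with $\de_i \downarrow 0$, whose Ricci flows either cease to exist or leave $\eps$-almost-flatness at some time $t_i \geq 0$. Using the scale-invariance of the almost-flat condition and parabolic rescaling of Ricci flow, I first normalize $\diam(M_i, g_i) = 1$, so that $\sup_{M_i}|{\Rm}_{g_i}| \leq \de_i \to 0$.

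Next I invoke Gromov's almost-flat manifolds theorem together with the Cheeger--Fukaya--Gromov nilpotent Killing structure theorem: for $i$ large, each $M_i$ is diffeomorphic to an infranilmanifold and, on a suitable finite cover $\widehat M_i$, the pulled-back metric $\widehat g_i$ is $o(1)$-close in $C^k$ (after a standard smoothing) to a genuine left-invariant metric $g_i^{\mathrm{inv}}$ on a simply-connected nilpotent Lie group $N_i$ acting freely and cocompactly on $\widehat M_i$. After passing to a subsequence I may assume the $N_i$ stabilize to a single nilpotent group $N$, and the invariant metrics converge in the space of inner products on $L(N)$ to a flat limit (since their curvatures vanish in the limit).

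I then analyze the invariant flows $g_i^{\mathrm{inv}}(t)$ directly. Left-invariant Ricci flow on $N$ reduces to an ODE on $\Sym^2 L(N)^\ast$, or equivalently, after an appropriate change of variables, to a bracket flow on the space of Lie brackets; by results of Lauret and standard ODE theory this flow is immortal, has Type III curvature decay, and its rescaled limit is a nilsoliton depending continuously on the initial data. Since the initial invariant metrics are uniformly nearly flat, a continuous dependence argument in the ODE framework yields the uniform bound $|{\Rm}_{g_i^{\mathrm{inv}}(t)}| \cdot \diam(\widehat M_i, g_i^{\mathrm{inv}}(t))^2 \to 0$ as $i \to \infty$, uniformly in $t \in [0,\infty)$.

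The final step is to transfer this control to the actual Ricci flows $\widehat g_i(t)$ by combining stability of Ricci flow under perturbations of the initial data with Shi's local derivative estimates; descending to the base then upgrades $\widehat g_i$ to $g_i$. The main obstacle is that the standard stability theorem is a finite-time statement, whereas I need uniform closeness on all of $[0,\infty)$. I propose to resolve this by partitioning $[0, \infty)$ into intervals matched to the natural parabolic scale $\sqrt{t}$ of the nilpotent flow; parabolic rescaling on each interval produces a bounded-geometry Ricci flow of uniform size, on which finite-time stability applies with constants independent of the interval. The compounded $C^0$-closeness $|\widehat g_i(t) - g_i^{\mathrm{inv}}(t)| \to 0$ on $[0,\infty)$ forces $|{\Rm}_{\widehat g_i(t)}| \cdot \diam(\widehat g_i(t))^2 \to 0$ uniformly in $t$, contradicting the failure hypothesis and completing the argument.
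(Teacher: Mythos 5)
Your overall strategy --- compactness, reduction to Ricci flow on left-invariant nilpotent metrics via Cheeger--Fukaya--Gromov, and then transferring control from the model flow to the actual flow --- is the same in spirit as the paper's, which also argues by contradiction and produces a pointed limit flow that is left-invariant on a simply connected nilpotent Lie group, invoking Heber--Lauret / Guzhvina (and a direct computation in dimension~$3$). The difference is in how the infinite time horizon is handled, and that is where your proposal has a genuine gap.

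The step ``parabolic rescaling on each interval produces a bounded-geometry Ricci flow of uniform size, on which finite-time stability applies with constants independent of the interval'' does not by itself give ``compounded $C^0$-closeness $\to 0$ on $[0,\infty)$.'' Finite-time stability loses a multiplicative constant $C>1$ on each interval; over infinitely many parabolic-scale intervals $[T_{j-1},T_j]$ the naive estimate compounds to $C^j$, which diverges. Likewise, the preceding claim that ``a continuous dependence argument in the ODE framework yields the uniform bound $\ldots$ uniformly in $t\in[0,\infty)$'' quietly asserts continuous dependence over an infinite time horizon, which again is not automatic. What is actually needed is a \emph{contraction mechanism} that re-damps the error after each step, and that is precisely what the paper extracts and isolates in Lemma~\ref{lem_nil_stability}: starting from an $\eps$-almost-flat metric with $\eps\le\eps_0$, after a definite parabolic time $AK_0^{-1}$ the flow is \emph{$\tfrac{\eps}{2}$-almost-flat} and has curvature $\le \tfrac12 K_0$ (Assertions~(3) and (4)), while remaining $C\eps$-almost-flat in between (Assertion~(2)). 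The strict improvement in both the almost-flatness constant and the curvature scale is what makes the iteration close up: the constant $C$ is paid once, not compounded, and the time intervals grow at least geometrically. Your write-up should either prove an analogous short-time improvement lemma (which is where the nilpotent-flow Claim is used: $|{\Rm_{g^*_t}}|\le\frac18$ and $|{\Rm_{g^*_t}}|g^*_t\le\frac1{16}g^*_0$ for $t\ge A'$), or make explicit some other reason the errors do not accumulate; without that, the argument as written does not establish uniform-in-time control.

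A smaller remark: you normalize $\diam(M_i,g_i)=1$ at $t=0$, but the $\eps$-almost-flat condition at later times involves $\diam(M_i,g_i(t))$, which evolves; the relevant scale-invariant quantity is $\sup|{\Rm}|\cdot\diam^2$, and the paper works throughout with the curvature normalization $\sup|{\Rm_{g_{i,0}}}|=1$ together with a distance-distortion estimate to propagate diameter control. This is cosmetic, but keeping track of it is necessary when you invoke the nilsoliton asymptotics (where curvature decays but diameter grows, and only the product stays controlled).
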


Theorem~\ref{thm_nil_stability} follows immediately by taking $\de:=C^{-1}\min(\eps_0,\eps)$ where $\eps_0$ and $C$ are as in Lemma~\ref{lem_nil_stability} below, and applying that lemma iteratively on time intervals $[T_{j-1},T_j]$ where $T_j=T_{j-1}+AK_{T_{j-1}}^{-1}$; note that intervals $[T_{j-1},T_j]$ grow at least geometrically by Assertion~(4) of the lemma. 

\begin{lemma} 
\label{lem_nil_stability}
There exist $\eps_0(n)>0$, $A(n), C(n) <\infty$ with the following property. Suppose that $(M,(g_t)_{t\in [0,T)})$ is  a  Ricci flow on a compact, $n$-manifold with $\eps$-almost-flat initial condition $(M,g_0)$ for some $\eps \leq \eps_0$; here $T \leq \infty$ is chosen maximal.
Then for $K_t:=\sup_M|{\Rm_{g_t}}|$ we have, assuming that $K_0 > 0$:
\begin{enumerate}[label=(\arabic*)]
\item $T > A K_0^{-1}$.
\item $(M,g_t)$ is $C\eps$-almost-flat for all $t\in [0,AK_0^{-1}]$.
\item $(M,g_t)$ is $\frac{\eps}{2}$-almost-flat for $t=AK_0^{-1}$.
\item $K_t\leq \frac12 K_0$ for $t=AK_0^{-1}$.
\end{enumerate}
\end{lemma}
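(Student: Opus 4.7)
The plan is to normalize by rescaling so that $K_0=1$, reducing to the case where $\sup|\Rm_{g_0}|=1$ and $\diam(M,g_0)^2\le \eps$; the four conclusions must then be established with universal constants $\eps_0(n),A(n),C(n)$.

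Assertions (1) and (2) are the easy part. Standard short-time existence together with Shi-type derivative estimates give a uniform existence time $A_0(n)>0$ on which $K_t\le 2K_0=2$. On this interval one has $|\partial_t g|\le C(n)|\Ric|\le C(n)$, so the metrics $g_t$ and $g_0$ are mutually bilipschitz with a dimensional constant, giving $\diam(M,g_t)^2\le C\eps$ and hence the $C\eps$-almost-flatness bound. This yields (1) and (2) once we take $A\le A_0$.

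For (3) and (4) I would argue by contradiction and compactness. Assume the contrary: there is a sequence of Ricci flows $(M_i,g_{i,t})$ on $[0,A]$ with $K_{i,0}=1$ and $\diam(M_i,g_{i,0})^2\le\eps_i\to 0$, yet $K_{i,A}>\tfrac12$ or $K_{i,A}\diam(M_i,g_{i,A})^2>\eps_i/2$. Since the initial data are collapsing, I would pass to the frame bundle or to a suitable finite cover in order to apply Fukaya-type equivariant Gromov--Hausdorff convergence; by Gromov's almost-flat manifold theorem, for large $i$ each $M_i$ is infranil, and the rescaled metric is arbitrarily close (in the bilipschitz sense, with a nilpotent Killing structure) to an invariant model. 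Hamilton's compactness theorem, applied to the non-collapsed covers with their induced equivariant structure, then produces a smooth limiting Ricci flow $(N_\infty,h_t)_{t\in[0,A]}$ where $h_0$ is a left-invariant metric on a simply connected nilpotent Lie group $N_\infty$ of dimension $n$ and $|\Rm_{h_0}|=1$.

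On a nilpotent Lie group with left-invariant initial metric, Ricci flow reduces to a finite-dimensional ODE on the space of inner products on $L(N_\infty)$ (Lauret, Payne, Lott): it is immortal, and under the normalization $|\Rm|\le 1$ at $t=0$ one has $|\Rm|(t)\to 0$ as $t\to\infty$, with explicit polynomial decay. Thus there is a universal $A=A(n)$ such that the model flow has $|\Rm_{h_A}|\le 1/4$ and correspondingly improved almost-flatness at $t=A$. Smooth equivariant convergence of the covering flows of $g_{i,t}$ to this model flow on $[0,A]$ then contradicts the assumed defect for large $i$, yielding (3) and (4) simultaneously.

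The main obstacle is carrying out the compactness step in the collapsed setting: one must lift to a cover (or a frame-bundle-type object) where the metric is non-collapsed, propagate the nilpotent symmetry structure along the flow, and pass to a smooth equivariant limit compatible with the model Ricci flow on $N_\infty$. The remaining content is the asymptotic decay of nilpotent left-invariant Ricci flow, for which we cite Lauret--Payne or Guzhvina; the only new feature here is that the constants in the conclusion are extracted uniformly from the compactness argument rather than depending on the specific manifold, which is immediate from the contradiction setup.
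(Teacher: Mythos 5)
Your overall strategy --- normalize so that $K_0=1$, argue by contradiction along a sequence $\eps_i\to 0$, pass to a (non-collapsed, equivariant) smooth limit which is a Ricci flow through left-invariant metrics on a simply connected nilpotent Lie group, and then invoke the known asymptotics of that model flow --- is the same as the paper's. But two steps in your outline have real gaps.

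First, for Assertion~(3) it is not enough to know that the model flow satisfies $|{\Rm}_{h_t}|\to 0$. The model space is a non-compact, simply connected nilpotent group, so ``improved almost-flatness'' has no direct meaning there; what one must extract from the model is the scale-invariant metric comparison $|{\Rm}_{h_t}|\,h_t\le c\,h_0$ (for all $t>0$) together with $|{\Rm}_{h_t}|\,h_t\le \tfrac1{16}\,h_0$ for $t\ge A'$. It is this second family of inequalities, not the curvature decay alone, that lets you bound $\diam(M_i,g_{i,A})^2$ in terms of $\diam(M_i,g_{i,0})^2$ and conclude $K_{i,A}\diam(M_i,g_{i,A})^2\le\tfrac12\,\diam(M_i,g_{i,0})^2\le\eps_i/2$. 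Curvature decay by itself is compatible with the horizontal directions of the metric expanding, so the product $K_t\diam^2$ need not be small without the extra control. This metric-comparison statement is exactly what the paper's Claim records (it is the content of the cited Guzhvina/Heber/Lauret results), and your phrase ``correspondingly improved almost-flatness'' is silently relying on it.

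Second, you choose $A$ from the model flow asymptotics and then assume the counterexamples $(M_i,g_{i,t})$ are defined on $[0,A]$. But your treatment of Assertions~(1)--(2) only gives a universal short time $A_0(n)$, which may be much smaller than the $A$ needed for the curvature to drop to $1/4$. If some $T_i\le A$, the quantities you contradict at time $A$ are not even defined. The paper circumvents this by choosing $T^*$ maximal so that curvature bounds hold on every $[0,T']$ with $T'<T^*$ along a subsequence, constructing the limit on $(0,T^*)$, and then using the fact that the model flow is immortal to force $T^*=\infty$; this simultaneously yields Assertion~(1) for the chosen $A$. Without something of this kind your contradiction argument doesn't close. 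Both points are fixable within your framework, but as written they are gaps.
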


\begin{proof}
We need the following statement.

\begin{claim}
There exist $A' (n), C'(n) <\infty$ with the following property.
Suppose that $(M^*, (g^*_t)_{t \in [0,T^*)})$ is a maximal Ricci flow through left-invariant metrics on a simply connected, nilpotent Lie group with $ |{\Rm_{g^*_0}}| =  1$. 
Then $T^* = \infty$ and the following is true:
\begin{enumerate}[label=(\arabic*)]
\item $ |{\Rm_{g^*_t}}| \leq  C' $ and $|{\Rm_{g^*_t}}| g^*_t \leq   C'  g^*_0$ for all $t >0$.
\item  $ |{\Rm_{g^*_t}}| \leq \frac18 $ and $|{\Rm_{g^*_t}}| g^*_t \leq  \frac1{32}  g^*_0$ for all $t \geq A'$.
\end{enumerate}
\end{claim}

\begin{proof}
This is essentially the content of \cite[Theorem~2.7]{Guzhvina}, which is based on work of Heber and Lauret \cite{Heber_98, Lauret_01}.
If $n=3$, the case of interest for this paper, then the argument is much simpler.
Up to isomorphism, there are only two 3-dimensional simply connected Lie groups: $\R^2$ and $\nil$.
The case $M' = \R^2$ is trivially true and the case $M' = \nil$ follows from a simple computation, see for example \cite{isenberg_jackson}.
\end{proof}

\bigskip
Fix $A', C'$ according to the Claim and suppose that the lemma was false for $A := 16C'A' + 1$ and $C := 16 C'$.
Choose counterexamples $(M_i, (g_{i,t})_{t \in [1, T_i)})$ for a sequence $\eps_i \to 0$.
By parabolic rescaling, we may assume that $\sup_{M_i} |{\Rm_{g_{i,0}}}|=1$.
By applying the maximum principle to $|{\Rm}|$, we obtain a $\tau \in (0,1]$ such that
\begin{equation} \label{eq_Rm_leq_1ot}
 |{\Rm_{g_{i,t}}}|  < 2, \quad \diam(M, g_{i,t}) < 2 \diam (M, g_{i,0}) \qquad \text{for all} \quad t \in [0, \tau]. 
\end{equation}
Choose $T^* \leq \infty$ maximal with the property that for any $T' < T^*$ there is a subsequence such that we have $T' < T_i$ and  $\sup_{M_i, t \in [0,T']} |{\Rm_{g_{i,t}}}| < C(T')$ for some uniform $C(T') < \infty$.
By a simple distance-distortion estimate, we have $\sup_{t \in [0,T']} \diam (M, g_{i,t}) \linebreak[1] \to 0$.
So, after passing to a diagonal subsequence, the universal covers of the flows $(M_i, (g_{i,t})_{t \in (0,T^*)})$, pointed at arbitrary points, converge to a maximal Ricci flow of the form $(M^*, (g^*_{t})_{t \in (0,T^*)})$, which is invariant under a transitive action of a nilpotent group; see \cite{Guzhvina} for details.
By the Claim we have $T^* = \infty$.
Set $K^\infty_\tau := \sup_{M^*} |{\Rm_{g^*_\tau}}| \leq 2$.

\textit{Case 1: $K^\infty_\tau= 0$ \quad} In this case $(M^*, (g^*_{t})_{t \in [\tau,T^*)})$ is flat and we obtain  a contradiction for large $i$.

\textit{Case 2: $K^\infty_\tau \leq (16 C')^{-1}$ \quad}
Consider the time-shifted flow $(M^*, \linebreak[1](g^*_{t-\tau})_{t \geq 0})$.
Rescaling this flow parabolically by the factor $K^\infty_\tau$ yields a Ricci flow that satisfies the assumptions of the Claim.
Assertion~(1) of the Claim provides geometric bounds, which imply that the original flow $(M^*, \linebreak[1](g^*_{t})_{t \geq 0})$ satisfies
\begin{equation} \label{eq_a1}
 |{\Rm_{g^*_t}}| \leq C' K^\infty_\tau , \qquad
|{\Rm_{g^*_t}}|g^*_t \leq C' K^\infty_\tau g^*_\tau 
\end{equation}
for all $t \geq \tau$.
Let us now argue that Assertions~(1)--(4) of the lemma hold for large $i$, which will give us the desired contradiction.
Assertion~(1) holds since $T^* = \infty$.
Assertion~(2) holds for $t \in [0,\tau]$ due to (\ref{eq_Rm_leq_1ot}), which implies that $(M, g_{i,t})$ is $8\eps$-almost-flat.
To see Assertion~(2) for $t \geq \tau$, we use the second bound in (\ref{eq_a1}) to deduce that for large $i$ the almost-flatness increases by at most a factor of $2C'$ between the times $\tau$ and $t$.
Next, note that by (\ref{eq_Rm_leq_1ot}) we get that $(M, g_{i,\tau})$ is even $4 K_\tau^\infty \eps$-almost-flat for large $i$, so by the same argument as before, we obtain that $(M, g_{i,t})$ is even $8 C' K_\tau^\infty \eps$-almost flat.
Since $8 C' K_\tau^\infty \leq \tfrac12$, this implies Assertion~(3) for large $i$.
Assertion~(4) follows from the first bound in (\ref{eq_a1}) for large $i$ since $C' K^\infty_\tau \leq \tfrac1{16}$.

%
%

\textit{Case 3: $K^\infty_\tau > (16 C')^{-1}$ \quad} 
The argument is similar to that in Case 2.
Assertions~(1) and (2) of the lemma follow via the previous argument, since they did not depend on the bound for $K^\infty_\tau$.
To see the remaining assertions, we apply the Claim as in Case~2, but use Assertion~(2), to obtain that
\begin{equation} \label{eq_a2}
 |{\Rm_{g^*_t}}| \leq \tfrac18 K^\infty_\tau \leq \tfrac14, \qquad |{\Rm_{g^*_t}}| g^*_t \leq \tfrac1{32} K^\infty g^*_\tau   
\end{equation}
for $t \geq \tau + A' (K^\infty_\tau)^{-1}$.
Note that by the choice of $A$, these bounds hold for $t = A$. 
As in the previous case, we find that $(M, g_{i,\tau})$ is $8\eps$-almost flat and the second bound in (\ref{eq_a2}) implies that for large $i$ the almost-flatness decreases by a factor of at most $\frac1{16}$ between the times $\tau$ and $A$.
This implies Assertion~(3).
Assertion~(4) follows directly from the first bound in (\ref{eq_a2}) for large $i$.
\end{proof}

\subsection{Improving almost Nil-metrics}

In the following, let $X$ be a closed $3$-manifold modeled on $\nil$.  We will show that the metrics obtained in Theorem~\ref{thm_structure_singular_ricci_flow}(4) can be canonically improved to $\nil$-structures.  The following lemma may be compared with \cite[Remarks 3.4, 6.2]{bamler_kleiner_gsc}.

\begin{lemma}
\label{lem_rounding}
There is an $\eps_0(X) > 0$ with the following property.
Consider the subset $\mathcal{U}_{\eps_0}  \subset \met (X)$  of $\eps_0$-almost-flat metrics. There is a continuous map $\Psi : \mathcal{U}_{\eps_0} \to \met_{\nil}(X)$ such that if $g$ is a deformation of a metric $g^* \in \met_{\nil}(X)$ by the Ricci flow, then $\Psi(g) = g^*$.
\end{lemma}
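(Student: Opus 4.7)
The plan is to use the Cheeger-Fukaya-Gromov rounding to canonically produce a nearby Nil structure for any $\eps_0$-almost-flat metric on $X$, and then refine the normalization so that it intertwines with Ricci flow on $\met_\nil(X)$.

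For $\eps_0 = \eps_0(X)$ sufficiently small, I would apply \cite{cheeger_fukaya_gromov_nilpotent_structures} (as in the proof of Lemma~\ref{lem_exclude_large_diam}) to each $g \in \mathcal{U}_{\eps_0}$ to obtain a nilpotent Killing structure $\mathfrak{N}(g)$ and a continuously-varying metric $g^\flat = g^\flat(g)$, $C^k$-close to $g$, that is invariant under $\mathfrak{N}(g)$.  The same argument as in Lemma~\ref{lem_exclude_large_diam}, using non-Haken-ness of $X$, forces the orbits of $\mathfrak{N}(g)$ to be $3$-dimensional.  Hence $g^\flat$ is locally isometric to a left-invariant metric on $\nil$; since every such left-invariant metric is homothetic to $g_\nil$, $g^\flat$ defines a Nil structure on $X$ (possibly of non-unit volume).

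Naively rescaling $g^\flat$ to unit volume yields a continuous candidate map $\widehat{\Psi}:\mathcal{U}_{\eps_0}\to\met_\nil(X)$, but this fails the Ricci flow compatibility: running RF on $g^*\in\met_\nil(X)$ evolves the shape of the lifted left-invariant metric (through the $\aut(\nil)$-orbit of $g_\nil$), not merely its scale, so the unit-volume rescaling of $g^*(t)$ lies in $\met_\nil(X)$ but generally differs from $g^*$.  To correct this I would exploit that Ricci flow on the finite-dimensional space of left-invariant Nil metrics is an ODE whose orbits (modulo homotheties and $\isom(\nil)$) trace canonical curves; for $\eps_0$ small, $g^\flat$ lies canonically near one such curve, and I define $\Psi(g)$ to be the unique point on this curve which descends to $\met_\nil(X)$, extracted by backward extrapolation along the ODE.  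By construction this gives $\Psi(g^*(t)) = g^*$ whenever $g = g^*(t)$ lies on a forward RF trajectory from $\met_\nil(X)$; continuity follows from the continuity of $g^\flat$ together with smooth dependence of the ODE on initial data.

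The main obstacle will be justifying that the backward extrapolation in the previous paragraph descends to a well-defined, continuous map into $\met(X)$: one must track how the time-dependent $\aut(\nil)$ shape change descends through the lattice action of $\pi_1(X)\subset\isom(\nil)$.  This relies on the rigidity results for Nil lattices from \cite{auslander_bieberbach} and a direct analysis of Ricci flow on left-invariant metrics on $\nil$ (compare \cite{isenberg_jackson}).
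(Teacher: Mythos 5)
Your approach is genuinely different from the paper's: the paper does not invoke Cheeger--Fukaya--Gromov rounding anywhere in the proof of Lemma~\ref{lem_rounding}. Instead, after smoothing $g$ by a short Ricci flow to obtain uniform derivative bounds, it passes to the translation-subgroup cover $\wh X \to X$, builds a quasi-Seifert fibration $\phi : \wh X \to Y$ over a flat torus $Y$ out of harmonic $1$-forms, and then assembles a Nil metric by hand from the resulting principal $S^1$-bundle, replacing the connection's curvature form by its harmonic representative and adjusting the horizontal scale to hit the right volume. Ricci-flow compatibility then falls out because Ricci flow on Nil metrics moves purely through Carnot dilations and scalings, which leave $\phi$ and hence the entire construction invariant, so the construction returns $g^*$.

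The central gap in your proposal is the asserted continuity of the CFG rounding $g \mapsto g^\flat(g)$. The theorem of \cite{cheeger_fukaya_gromov_nilpotent_structures} produces, for a single metric $g$, a nilpotent Killing structure and a nearby invariant metric, but the construction is built from many non-canonical choices (local fibrations, center-of-mass averaging, smoothings); that the output can be arranged to vary continuously with $g$ in the $C^\infty$-topology is a nontrivial additional claim that you would have to prove, and this is precisely the difficulty the paper avoids by substituting the manifestly canonical Hodge-theoretic construction. A secondary issue is the ``backward extrapolation'': you must check that the Ricci-flow trajectory of left-invariant Nil metrics through $g^\flat$ contains a unique time with unit descended volume (true, since the volume increases strictly from $0$ to $\infty$ over the maximal existence interval) and that this time depends continuously on $g^\flat$; also the word ``backward'' is misleading, since $g^\flat$ may well have volume less than $1$. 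Finally, a minor point: non-Haken-ness is not what forces the CFG orbits to be $3$-dimensional in this regime. After normalizing $\sup|{\Rm}| = 1$, almost-flatness makes the diameter small, so the collapse is in all directions and the orbits are automatically top-dimensional; non-Haken-ness is used in Lemma~\ref{lem_exclude_large_diam} for a different purpose (excluding large-diameter collapse along incompressible tori), and Lemma~\ref{lem_rounding} in the paper is in fact stated without any non-Haken hypothesis.
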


\begin{proof}
Fix some $g \in \mathcal{U}_{\eps_0}$.
We provide a construction for the metric $\Psi(g) \in  \met_{\nil}(X)$, which depends continuously on $g$.
By replacing $g$ with $\sup_X |{\Rm_g}| \, g$, we may assume that $\sup_X |{\Rm_g}| = 1$.
Next, we may evolve $g$ via the Ricci flow for some uniform time, using Theorem~\ref{thm_nil_stability}, to obtain a metric $g' \in \mathcal{U}_{\eps_0}$ with 
\begin{equation} \label{eq_nabmRm}
|{\nabla^m \Rm}_{g'}| \leq C_m ,
\end{equation}
for some uniform constants $C_m < \infty$.
It is clear that $g'$ depends continuously on $g$.

For notational simplicity we replace $g$ with $g'$. 

Let $\pi : (\wh X, \wh g) \to (X,g)$ be the finite normal covering corresponding to the translation subgroup of $\pi_1(X)\subset \isom(\nil)$ and denote by $G \curvearrowright \wh X$ the action by the finite deck-transformation group. 
Identify $H^1 (\wh X; \R) \cong \R^2$ with the space $\mathcal{H}^1$ of harmonic 1-forms via de Rham cohomology.
Consider the inner product $(\cdot, \cdot)$ on $\mathcal{H}^1$ defined by
\[ (\alpha_1, \alpha_2) := \frac1{\vol(\wh{X}, \wh g)} \int_{\wh X} \langle \alpha_1, \alpha_2 \rangle \, d\wh g, \qquad \alpha_1, \alpha_2 \in \mathcal{H}^1. \]
Let $(Y^2, g_Y)$ be the flat torus obtained by the quotient $$(\mathcal{H}^1)^* / H_1(\wh X; \Z) \cong H_1 (\wh X;\R)/ H_1(\wh X; \Z),$$ equipped with the affine metric induced by $(\cdot, \cdot)$.
We now construct a map $\phi : \wh X \to Y$ as follows.
Fix some $p \in \wh X$.
For any $q \in \wh X$ consider a 1-chain $\gamma \subset \wh X$ with $\partial\gamma = q - p$, which is unique up to a representative of $H_1(\wh X; \Z)$.
Consider the element $h \in (\mathcal{H}^1)^*$ with the property that $h(\alpha) = \alpha(\gamma)$ for any $\alpha \in \mathcal{H}^1$ and define $\phi(q)$ to be the class of $h$ in $(\mathcal{H}^1)^* / H_1(\wh X; \Z)$.

\begin{claim}
\begin{enumerate}[label=(\arabic*)]
\item $\phi : \wh X \to Y$ is smooth and its differential is given by 
\[ d\phi_q : T_q \wh X \longrightarrow T_{\phi(q)} Y \cong (\mathcal{H}^1)^*, \qquad v \longmapsto (\alpha \longmapsto \alpha(v)) \]
\item $\phi$ induces an isomorphism $H_1(\wh X;\Z)\ra H_1(Y;\Z)$.
\item There is an isometric action $G \curvearrowright Y$ for which $\phi$ is equivariant.
\item If $\eps_0 \leq \ov\eps_0(\wh X)$, then $\phi$ is a submersion whose fibers are circles.
\item If $(\wh X, \la^{-2}\wh g)=\nil/\wh\Ga$ for some $\la > 0$, then $\phi$ lifts to the abelianization fibration.
\end{enumerate}
\end{claim}

\begin{proof}
Assertions~(1), (2) are clear by construction.
For Assertion~(3) note that a different choice of basepoint $p$ leads to a map of the form $\phi' = \psi \circ \phi$ for some isometry $\psi : (Y,g_Y) \to (Y, g_Y)$.

To see the first part of Assertion~(4) we argue by contradiction.
So assume that for a sequence $\eps_{0,i} \to 0$ there are metrics $\wh g_i$ such that none of the induced maps $\phi_i : (\wh X, \wh g_i) \to (Y_i, g_{Y_i})$ are submersions.
By (\ref{eq_nabmRm}) we have $\diam(\wh X, \wh g_i) \to 0$.
By Assertion~(1) there must be non-zero harmonic 1-forms $\alpha_i$ on $(\wh X, \wh g_i)$ that vanish at some point $q_i \in \wh X$.
Without loss of generality, we may assume that $\sup_{\wh X} |\alpha_i| = 1$.
Now consider the pullbacks $\td\alpha_i$, $\td g_i$ of $g_i$ and $\alpha_i$ via the exponential map based at $q_i$.
Due to standard gradient estimates and the curvature bounds, we obtain a uniform bound of the form $|\nabla \td\alpha_i| \leq C$ in a fixed ball around the origin.
So $|\nabla \alpha_i| \leq C$ on $\wh X$ for large $i$.
This, however, contradicts the fact that $\sup_{\wh X} |\alpha_i| = 1$ and $|(\alpha_i)_{q_i}| = 0$ for large $i$.  Thus we may assume that $\phi$ is a submersion; the fact that the fibers of $\phi$ are connected follows from Assertion~(2), for example, via the long exact homotopy sequence.

For Assertion~(5) 
note that any constant 1-form on $\R^2$ lifts to a harmonic 1-form on $(\nil, \lambda^2 g_{\nil})$ via the abelianization map $\nil \to \R^2$. Any such lift descends to a harmonic form on $(\wh X, \wh g)$.
Combining this with Assertion~(1) implies Assertion~(5).
\end{proof}

Consider now the $S^1$-fibers of $\phi$.
For any $q \in \wh X$ let $\ell (q)$ be the length of the $S^1$-fiber through $q$.
These fibers represent a generator of the center in $\pi_1 (\wh X)$.
Fixing such a generator determines a unique orientation on the fibers.
Let now $V$ be the uniquely defined vector field that is tangent to these fibers, satisfies $|V| = \ell$ and is positively oriented.
Then the flow of $V$ induces a principal $S^1$-bundle structure on $\wh X$.
We obtain a principal connection $\theta$ on this bundle by averaging $\wh g (\ell^{-1} V, \cdot)$ via the $S^1$-action.
Denote its curvature 2-form by $\omega \in \Omega^2 (Y)$.
Let $\om'\in\Om^2(Y)$ be the parallel $2$-form cohomologous to $\om$, and let $\th'=\th+\phi^*\ov\th$, where $\ov\th$ is the unique element of $\Om^1(Y)$ that is $L^2$-orthogonal to $\ker (d:\Om^1(Y)\ra \Om^2(Y))$ and satisfies $d\ov\th=\om'-\om$.  Let $\ov U_1,\ov U_2$ be a parallel orthonormal frame  on $Y$, and for $a>0$ we let $\wh g_a$ be the Riemannian metric on $\wh X$ such that $U_1, U_2, [U_1,U_2]$ are orthonormal, where $U_1, U_2$ are horizontal lifts with respect to the connection $\th'$ of $a\ov U_1,a\ov U_2$, respectively.   Let $\ov a>0$ be the unique choice of $a$ such that the volume of $\wh g_{\ov a}$ equals the order of the normal covering $\wh X\ra X$.  Note that $\wh g_{\ov a}$ is invariant under the action of the deck group of the covering $\wh X\ra X$, and therefore it descends to a metric on $X$, which we declare to be $\Psi(g)$.

By construction and the Claim, $\Psi(g)$ depends continuously on $g$.  
To see the last statement, observe that $(\wh X,\wh g^*)$ is isometric to a quotient of $\nil$ by a lattice $\wh\Gamma \subset \isom(\nil)$; in the following we identify $(\wh X, \wh g^*) = \nil / \wh \Gamma$.
The Ricci flow $(g_{\nil, t})_{t \geq 0}$ starting from $g_{\nil}$ preserves the symmetries of $g_{\nil}$ and descends to a flow on $\nil / \wh\Gamma$.
Due to the preservation of symmetries, the left-invariant vector fields $X_1, X_2, X_3$ on $\nil$ from Subsection~\ref{subsec_prelim}  remain orthogonal and satisfy $g_{\nil,t}(X_1,X_1) = g_{\nil,t}(X_2,X_2)$.
So by the discussion in Subsection~\ref{subsec_prelim}, we obtain that $g_{\nil,t} = \lambda^2_t \psi^*_t g_{\nil}$ for some family of Carnot dilations $\psi_t$ and scalars $\lambda_t > 0$.
Combining this with Assertion~(5) of the Claim, we obtain that the map $\phi : \nil / \wh \Gamma \to \R^2 / \wh\Gamma$, constructed with respect to any $g_{\nil,t}$, is the standard Seifert fibration, which lifts to the abelianization map $\nil \to \R^2$. Following the construction succeeding the Claim shows that $\Psi(g) = g^*$.
\end{proof}

\section{Proof of Theorem~\ref{thm_main}}
The proof of Theorem~\ref{thm_main} is the same as the proof of the corresponding theorems for spherical space forms and hyperbolic manifolds given in \cite{bamler_kleiner_gsc}, apart from some minor changes, which we now explain.

Section 3 of \cite{bamler_kleiner_gsc} shows that if $X$ is a spherical space form, then we may assign to every $g\in\met(X)$ a partially defined metric $(W_g,\check g)$ with sectional curvature $\equiv 1$, in a continuous manner.
In our case, the corresponding result is:

\begin{lemma}
Let $X$ be a compact, non-Haken manifold modeled on $\nil$, and pick $g_X\in \met_{\nil}(X)$.  Then for every $g\in \met(X)$ there is a  partially defined metric $(W_g,\check g)$ on $X$ (as defined in \cite[Definition~2.3]{bamler_kleiner_gsc}) such that:
\ben
\item
\label{item_isometric_punctured} $(W_g,\check g)$ is isometric to $(X\setminus S_g, g_X)$ for some finite (possibly empty) subset $S_g\subset X$, where the cardinality of $S_g$ is bounded above by a continuous function of $g$ (in the $C^\infty$ topology).
\item
\label{item_nil_same} If $g\in \met_{\nil}(X)$ then $(W_g,\check g)=(X,g)$.  
\item
\label{item_continuous} The assignment $g\mapsto (W_g,\check g)$ is continuous.
\een
\end{lemma}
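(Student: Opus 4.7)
The plan is to mimic the construction in \cite[Section~3]{bamler_kleiner_gsc} for spherical space forms, with the rounding map $\Psi$ of Lemma~\ref{lem_rounding} playing the role of the almost-round rounding procedure used there. First I would form the singular Ricci flow $\M$ with initial time-slice $(X,g)$; by Theorem~\ref{thm_structure_singular_ricci_flow}(4) there is a time $T=T(g,\eps_0)$, depending continuously on $g$ in the $C^2$-topology, such that $\M$ is non-singular on $[T,\infty)$ and $(\M_t, g_t)$ is $\eps_0$-almost-flat for all $t \geq T$, where $\eps_0=\eps_0(X)$ is the constant from Lemma~\ref{lem_rounding}. Since $\eps_0$-almost-flat compact 3-manifolds cannot be spherical space forms for $\eps_0$ small (by Gromov's almost-flat theorem), and since $X$ is prime and non-spherical, Theorem~\ref{thm_structure_singular_ricci_flow}(2) forces $\M_T$ to be compact and diffeomorphic to $X$.

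Next I would define $W_g\subset X=\M_0$ to be the open subset consisting of points whose worldlines under the time vector field $\partial_\t$ survive all the way to time $T$, and denote by $\Phi_W:W_g\hookrightarrow \M_T$ the induced smooth embedding. Because topology changes in $\M_{[0,T]}$ occur at finitely many times and affect only finitely many points (Theorem~\ref{thm_structure_singular_ricci_flow}(2)), both $X\setminus W_g$ and $\M_T\setminus\Phi_W(W_g)$ are finite. Applying $\Psi$ to $g_T$ (via the diffeomorphism $\M_T\cong X$) produces $g^*:=\Psi(g_T)\in\met_{\nil}(X)$, which we transport to a metric on $\M_T$ and then restrict to $\Phi_W(W_g)$ before pulling back via $\Phi_W$ to define $\check g$ on $W_g$. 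By Lemma~\ref{lem_unique_unit_volume_nil}, $(X,g^*)$ is isometric to $(X,g_X)$; transporting the finite complement $\M_T\setminus\Phi_W(W_g)$ under such an isometry yields a finite $S_g\subset X$ with $(W_g,\check g)$ isometric to $(X\setminus S_g, g_X)$, establishing \eqref{item_isometric_punctured}.

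For \eqref{item_nil_same}, if $g\in\met_{\nil}(X)$ then no singularities form in $\M$ and the spacetime is $X\times [0,\infty)$, so $W_g=X$, $S_g=\emptyset$, and $\Phi_W$ is the identity under the product trivialization. Since $g_T$ is then a genuine Ricci flow deformation of $g$, the defining property of $\Psi$ in Lemma~\ref{lem_rounding} gives $g^*=\Psi(g_T)=g$, and hence $\check g=g$, so $(W_g,\check g)=(X,g)$.

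Assertion \eqref{item_continuous}, together with the cardinality bound in \eqref{item_isometric_punctured}, follows by combining the continuity of $g\mapsto T(g,\eps_0)$ (Theorem~\ref{thm_structure_singular_ricci_flow}(4)), the continuity of $\Psi$ (Lemma~\ref{lem_rounding}), and the stability of singular Ricci flows in the initial data \cite[Theorem~1.5]{bamler_kleiner_uniqueness_stability}. The main obstacle, as in the spherical space form case, is continuity of $g\mapsto(W_g,\check g)$ at values of $g$ for which the number of topology changes in $\M_{[0,T]}$ jumps: for nearby $g'$ additional neck pinches may appear, enlarging $S_{g'}$, but the corresponding ``lost'' regions converge to points as $g'\to g$, so the partial metrics still converge in the sense of \cite[Definition~2.3]{bamler_kleiner_gsc}. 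This argument should carry over from \cite[Section~3]{bamler_kleiner_gsc} essentially verbatim, with Lemma~\ref{lem_rounding} replacing the rounding procedure used there.
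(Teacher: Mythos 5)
Your proposal takes essentially the same route as the paper: run the singular Ricci flow $\M$ from $(X,g)$, use Theorem~\ref{thm_structure_singular_ricci_flow}(4) to find a continuous time $T=T(g,\eps_0(X))$ after which the flow is non-singular and $\eps_0$-almost-flat, apply the rounding map $\Psi$ of Lemma~\ref{lem_rounding} to the time-$T$ slice, and transport the resulting $\nil$-metric back to the initial time slice along the worldline flow $\Phi_{T,0}$; items \eqref{item_isometric_punctured}--\eqref{item_continuous} are then deferred to the spherical argument in \cite[Section~3]{bamler_kleiner_gsc}, exactly as the paper does. Your added remarks (that $\M_T$ is diffeomorphic to $X$ by almost-flatness together with Theorem~\ref{thm_structure_singular_ricci_flow}(2)(3), and the explicit appeal to Lemma~\ref{lem_unique_unit_volume_nil} to identify $\Psi(g_T)$ with $g_X$) are correct and merely make explicit steps the paper leaves implicit.

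One small slip to watch: you assert that ``both $X\setminus W_g$ and $\M_T\setminus\Phi_W(W_g)$ are finite,'' but the first of these is false in general. Starting data $g$ can develop neck-pinches or extinct $S^3$-components before time $T$, in which case entire open regions of $X=\M_0$ fail to survive forward to time $T$, so $X\setminus W_g$ may have nonempty interior. Luckily this is irrelevant: item \eqref{item_isometric_punctured} only concerns the \emph{abstract} Riemannian manifold $(W_g,\check g)$, which by construction is isometric to $(C_{T,0},\wh g)\subset(\M_T,\wh g)$, and the finiteness that actually matters is that of the complement at time $T$. Also note that Theorem~\ref{thm_structure_singular_ricci_flow}(2) alone does not directly yield that finiteness; as in the spherical case, it rests on the finer structure theory of singular Ricci flows from \cite{bamler_kleiner_gsc,kleiner_lott_singular_ricci_flows}, which both you and the paper invoke implicitly by deferring to the spherical argument.
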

\begin{proof}
Pick $g\in \met(X)$, and let $\M$ be a singular Ricci flow with $\M_t=(X,g)$.

For every $t_1,t_2\in[0,\infty)$, let $C_{t_1,t_2}\subset C_{t_1}$ be the set of points in $C_{t_1}$ that survive until time $t_2$, i.e. the points for which the time $(t_2-t_1)$-flow of the time vector field $\D_\t$ is defined.  Then $C_{t_1,t_2}$ is an open subset of $C_{t_1}$, and the time $(t_2-t_1)$-flow of $\D_\t$ defines a smooth map $\Phi_{t_1,t_2}:C_{t_1,t_2}\ra \M_{t_2}$, which is a diffeomorphism onto its image.

Now let $T=T(g,\eps_0(X))$, where $\eps_0(X)$ is as in Lemma~\ref{lem_rounding}, and $T(g,\eps_0(X))$ is as in Theorem~\ref{thm_structure_singular_ricci_flow}(4).    Then by Theorem~\ref{thm_structure_singular_ricci_flow}(4) the time-$T$ slice $\M_T$ satisfies the hypotheses of Lemma~\ref{lem_rounding}; we let $\wh g\in \met_{\nil}(\M_T)$ be the $\nil$ metric  supplied by that lemma.  We then define $(W_g,\check g)=(\Phi_{T,0}(C_{T,0}),(\Phi_{T,0})_*\wh g)$.

Properties (\ref{item_isometric_punctured})--(\ref{item_continuous}) now follow as in the spherical case. 
\end{proof}

Sections~4 and 5 of \cite{bamler_kleiner_gsc} now carry over after replacing $\met_{K\equiv 1}(X)$  by $\met_{\nil}(X)$, and the Thurston geometry $S^3$ by  $\nil$.  
Instead of \cite[Lemma~4.3]{bamler_kleiner_gsc} we use:

\begin{lemma}[Extending Nil metrics over a ball]
\label{lem_extending_metrics_ball}
In the following, we let $S^2$ and $D^3$ denote the unit sphere and unit disk in $\R^3$, respectively, and we let $N_r(S^2)$ denote the metric $r$-neighborhood of $S^2\subset \R^3$.

Suppose $m\geq 0$, $\rho>0$ and: 
\begin{enumerate}[label=(\roman*)]
\item $h_{m+1}:D^{m+1}\ra \met_{\nil}(N_{\rho}(S^2)\cap D^3)$ is a continuous map such that for all $p\in D^{m+1}$, the Riemannian manifold $(N_{\rho}(S^2)\cap D^3,h_{m+1}(p))$ isometrically embeds in $\nil$.  Here $\met_{\nil}(N_{\rho}(S^2)\cap D^3)$ is equipped with the $C^\infty_{\loc}$-topology.
\item $\wh h_m:S^m\ra\met_{\nil}(D^3)$ is a continuous map such that for every $p\in S^m$ we have $\wh h_m(p)=h_{m+1}(p)$ on $N_{\rho}(S^2)\cap D^3$, and $(D^3,h_m(p))$ isometrically embeds in $\nil$.
\een

Then, after shrinking $\rho$ if necessary, there is a continuous map $\wh h_{m+1}:D^{m+1}\ra \met_{\nil}(D^3)$ such that:
\begin{enumerate}[label=(\alph*)]
\item $\wh h_{m+1}(p)=h_{m+1}(p)$ on $N_{\rho}(S^2)\cap D^3$ for all $p\in D^{m+1}$.
\item $\wh h_{m+1}(p)=h_m(p)$ for all $p\in S^m$.
\item If for some $p \in D^{m+1}$ there is a metric $g' \in \met_{\nil}(D^3)$ and some $\rho' > 0$ such that $(N_{\rho'}(S^2) \cap D^3, g')$ is isometric to $(N_{\rho'}(S^2) \cap D^3, h_{m+1}(p))$, then $(D^3, g')$ is isometric to $(D^3, \wh{h}_{m+1}(p))$ and thus the volumes of both metrics are the same.
\een
\end{lemma}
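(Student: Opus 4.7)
The plan is to adapt the proof of \cite[Lemma~4.3]{bamler_kleiner_gsc} to the $\nil$ setting. For each $p \in D^{m+1}$ I will choose an isometric embedding $\iota_p$ of the shell $N_\rho(S^2) \cap D^3$ into $\nil$, apply the smooth $3$-dimensional Schoenflies theorem to obtain the ball $\overline{\Omega}_p \subset \nil$ bounded by $\iota_p(\{|x|=1-\rho\})$, and pull back $g_\nil$ from $M_p := \overline{\Omega}_p \cup \iota_p(N_\rho(S^2) \cap D^3)$ via a diffeomorphism $\Phi_p : D^3 \to M_p$ extending $\iota_p$, defining $\wh{h}_{m+1}(p) := \Phi_p^* g_\nil$.

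To carry out the construction continuously and with the required boundary matching, I introduce the space $\mathcal{F}_p$ of $\nil$-metric extensions of $h_{m+1}(p)$ to all of $D^3$. Each $g \in \mathcal{F}_p$ embeds isometrically into $\nil$, and by the uniqueness (up to $\isom(\nil)$) of isometric embeddings of simply connected open subsets of $\nil$ into $\nil$, every element of $\mathcal{F}_p$ arises as $\Phi^* g_\nil$ for some diffeomorphism $\Phi : D^3 \to M_p$ extending a fixed $\iota_p$.  The space of such $\Phi$ is a torsor over $\Diff(D^3,\text{rel shell}) \cong \Diff(D^3_{\text{inner}},\text{rel }\partial)$, which is contractible by Hatcher's theorem on $\Diff(D^3,\text{rel }\partial)$, so each fiber $\mathcal{F}_p$ is contractible. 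Local continuous choices of $\iota_p$ are available from the developing/analytic-continuation construction (the shell is simply connected, and developing depends continuously on the metric), giving local trivializations of the total space $\mathcal{F} := \bigsqcup_p \mathcal{F}_p$ and exhibiting $\mathcal{F} \to D^{m+1}$ as a fibration with contractible fibers.

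With this in hand, I extend the continuous section $\wh{h}_m : S^m \to \mathcal{F}|_{S^m}$ to a continuous section $\wh{h}_{m+1} : D^{m+1} \to \mathcal{F}$ using the homotopy extension property. Properties (a) and (b) are immediate by construction. For (c), given $g'$ and $\rho'$ as in the hypothesis, pick isometric embeddings $(D^3, g') \hookrightarrow \nil$ and $(D^3, \wh{h}_{m+1}(p)) \hookrightarrow \nil$. Their restrictions to the smaller shell, composed with the given isometry of $(N_{\rho'}(S^2)\cap D^3, g')$ with $(N_{\rho'}(S^2)\cap D^3, h_{m+1}(p))$, produce two isometric embeddings of $(N_{\rho'}(S^2)\cap D^3, h_{m+1}(p))$ into $\nil$, which must differ by an element $\psi \in \isom(\nil)$.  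Since the outer boundary spheres of both embeddings bound the same compact region in $\nil$ (both $D^3$-embeddings produce bounded regions on the correct side), the full $D^3$-embeddings agree after composition with $\psi$, whence $(D^3, g')$ and $(D^3, \wh{h}_{m+1}(p))$ are isometric and have equal volume.

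The principal technical task is verifying the fibration property of $\mathcal{F} \to D^{m+1}$ --- that is, producing local trivializations via continuous choices of $\iota_p$ and $\Phi_p$ --- and matching the normalization on $S^m$ coming from $\wh{h}_m$.  One must also shrink $\rho$ so that the Schoenflies ball $\overline{\Omega}_p$ is unambiguously defined and varies continuously, and so that the developing map remains an embedding for every $p \in D^{m+1}$. These are technical but not conceptually novel; the analogous steps in the spherical case \cite{bamler_kleiner_gsc} adapt with $S^3$ replaced by $\nil$.
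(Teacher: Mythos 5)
Your proposal is correct and follows essentially the same route as the paper: both adapt the proof of \cite[Lemma~4.3]{bamler_kleiner_gsc}, using the smooth Schoenflies theorem, Hatcher's theorem on $\Diff(D^3,\mathrm{rel}\ \partial)$, and the contractibility of the space of $\nil$-metric extensions to upgrade a section over $S^m$ to one over $D^{m+1}$.

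The one genuine difference in emphasis is in how the continuous family of isometric embeddings of the shell is produced.  You invoke the developing map together with local trivializations of the ``extension fibration'' $\mathcal{F} \to D^{m+1}$, and then use the homotopy extension property to promote the section over $S^m$; the continuity and normalization of these local developing-map choices is flagged by you as a technical task but not carried out.  The paper instead makes this step concrete and global: it fixes a basepoint $x\in S^2$, observes that the local $\nil$-isometric action supplies a canonical unit vector $f_3(p) := (E_3)_x$ with $E_3 = [E_1,E_2]$ (the vertical direction of the Nil-structure, which is the natural $\nil$-specific replacement for the first step of the spherical argument), and then uses contractibility of $D^{m+1}$ to complete $f_3(p)$ to a continuously varying positively-oriented orthonormal frame $f_1(p),f_2(p),f_3(p)$.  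Sending this frame to a fixed orthonormal frame at a point of $\nil$ determines the embeddings $\psi_{m+1}(p)$ and $\wh\psi_m(p)$ \emph{uniquely}, and thereby produces a global trivialization directly, sidestepping the $O(2)$ ambiguity in the stabilizer of a point of $\isom(\nil)$.  Your fibration-and-HEP framing is a sound alternative; the paper's explicit frame construction is more self-contained because it makes the continuity verification immediate.  Your argument for assertion~(c) via uniqueness of isometric embeddings and the Schoenflies ball matches the paper's one-line remark.
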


\begin{proof}
The proof is the same as that of \cite[Lemma~4.3]{bamler_kleiner_gsc}, except for the first step.
Pick $x \in S^2, x' \in \nil$ and consider the orthonormal basis $e'_1 := (X_1)_{x'}, e'_2 := (X_2)_{x'}, e'_3 := (X_3)_{x'} \in T_{x'} \nil$, where $X_1, X_2, X_3$ denote the left-invariant vector fields from Subsection~\ref{subsec_prelim}.
Similarly, for any $p \in D^{m+1}$ let $f_1 (p) \in T_x \R^3$ such that there is a local $h_{m+1}(p)$-isometric $\nil$-action near $x$ and left-invariant, positively oriented orthonormal frame $E_1, E_2, E_3$ with $E_3 = [E_1,E_2]$ such that $f_3(p) = (E_3)_p$.
This property characterizes $f_3(p)$ uniquely and thus $f_3 : D^{m+1} \to T_x \R^3$ is continuous.
Since $D^{m+1}$ is contractible, we can now find $f_1, f_2 : D^{m+1} \to T_x \R^3$ such that $f_1(p), f_2(p), f_3(p)$ form a positively oriented $h_{m+1}(p)$-orthonormal basis for each $p \in D^{m+1}$.
It follows that there are unique isometric embeddings $\psi_{m+1}(p) : (N_\rho(S^2) \cap D^3, h_{m+1}(p)) \to \nil$, $\wh\psi_m(p) : (D^3, \wh{h}_m (p) ) \to \nil$ sending $f_1(p), f_2(p), f_3(p)$ to $e'_1, e'_2, e'_3$.
The remainder of the argument is the same as in \cite[Lemma~4.3]{bamler_kleiner_gsc}.

Assertion~(c), which is new, follows easily from the uniqueness of isometric embeddings.
\end{proof}

\section*{Acknowledgments}
We thank the anonymous referee for valuable comments.

\bibliography{diffm}{}
\bibliographystyle{amsalpha}

\end{document}